\newtheorem{thm}{Theorem}[section]
\newtheorem{prop}[thm]{Proposition}
\newtheorem{propdef}[thm]{Proposition-Definition}
\theoremstyle{remark}
\newcommand*\isom{%
  \xrightarrow{\sim}%
}
\def\qq{\mathbb{Q}}
\def\rr{\mathbb{R}}
\def\zz{\mathbb{Z}}
\def\CC{\mathbb{C}}
\def\mm{\mathcal{M}}
\def\aa{\mathcal{A}}
\def\bb{\hat{\mathcal{B}}}
\def\QQ{\mathcal{Q}}
\def\jj{\mathcal{J}}
\def\oo{\mathcal{O}}
\def\vv{\mathcal{V}}
\def\hh{\mathcal{H}}
\def\xx{\mathcal{X}}
\def\yy{\mathcal{Y}}
\def\Pic{\mathcal{P}\mathrm{ic}}
\def\CH{\mathrm{CH}}
\numberwithin{equation}{section}
\begin{document}

\title{Normal functions and the height of Gross-Schoen cycles}
\author{Robin de Jong}

\subjclass[2010]{Primary 14G40, secondary 14C25, 14D06.}

\keywords{Beilinson-Bloch height, biextension line bundle, Ceresa cycle, Griffiths intermediate jacobian, Gross-Schoen cycle, moduli space of curves, normal function, relative dualizing sheaf, variation of Hodge structure.}

\begin{abstract}  We prove a variant of a formula due to S. Zhang relating the Beilinson-Bloch height of the Gross-Schoen cycle on a pointed curve with the self-intersection of its relative dualizing sheaf. In our approach the height of the Gross-Schoen cycle occurs as the degree of a suitable Bloch line bundle. We show that the Chern form of this line bundle is non-negative, and we calculate its class in the Picard group of the moduli space of pointed stable curves of compact type. The basic tools are normal functions and biextensions associated to the cohomology of the universal jacobian. 
\end{abstract}

\maketitle
\thispagestyle{empty}

\section{Introduction}

Let $X$ be a smooth geometrically connected curve of genus $g \geq 2$ over a field~$k$. Let $e=\sum_i a_i p_i$ be a divisor of degree one on $X$. Consider the following cycles of codimension two on the triple product $X^3$:
\[ \Delta_{123} = \{ (x,x,x) \colon x \in X \} \, , \]
\[ \Delta_{12} = \sum_i a_i \{ (x,x,p_i) \colon x \in X \} \, , \]
\[ \Delta_{13} = \sum_i a_i \{ (x,p_i,x) \colon x \in X \} \, ,  \]
\[ \Delta_{23} = \sum_i a_i \{ (p_i,x,x) \colon x \in X \} \, , \]
\[ \Delta_1 = \sum_{i,j} a_ia_j \{ (x,p_i,p_j) \colon x \in X \} \, , \]
\[ \Delta_2 = \sum_{i,j} a_ia_j \{ (p_i,x,p_j) \colon x \in X \} \, , \]
\[ \Delta_3 = \sum_{i,j} a_ia_j \{ (p_i,p_j,x) \colon x \in X \} \, , \]
and put
\[ \Delta_e = \Delta_{123} - \Delta_{12}-\Delta_{13}-\Delta_{23}+\Delta_1+\Delta_2+\Delta_3 \, . \]
Then $\Delta_e$ is a cohomologically trivial cycle on $X^3$, studied in detail by B. Gross and C. Schoen \cite{gs}. Assume that $k$ is either a number field or a function field of a curve, and assume that $X$ has semistable reduction over $k$. Then Gross and Schoen construct in \cite{gs} a canonical $\rr$-valued height $(\Delta_e,\Delta_e)$ associated to $\Delta_e$, fitting in a general approach due to A. Beilinson \cite{beil} and S. Bloch \cite{bljpaa}. An important feature of the height $(\Delta_e,\Delta_e)$ of the Gross-Schoen cycle is that its non-vanishing implies the non-triviality of the class of $\Delta_e$ in the Chow group $\mathrm{CH}^2(X^3)$ of codimension-$2$ cycles on $X^3$, and hence (granting the truth of the Beilinson-Bloch conjecture) the vanishing of the special value of a suitable $L$-series attached to $X^3$. The height of the Gross-Schoen cycle is conjectured to be non-negative \cite{zh}, Conjecture 1.4.1.

In a recent paper \cite{zh} S. Zhang derived an explicit formula for $(\Delta_e,\Delta_e)$, and used this formula to prove some important new results in this direction. We describe the formula, and simply refer to \cite{zh} for further discussion of its ramifications.

First of all, for each place $v$ of $k$ Zhang introduces a local $\rr$-valued invariant $\varphi(X_v)$ of $X$ at $v$, as follows. Let $\mu_v$ be the canonical Arakelov $(1,1)$-form \cite{ar} \cite{fa} on $X_v = X \otimes \bar{k}_v$ if $v$ is archimedean, and let $\mu_v$ be the canonical admissible metric from \cite{zhadm}, Section 3 on the reduction graph $R_v$ of $X$ at $v$ if $v$ is non-archimedean. If $v$ is an archimedean place,  
let $\Delta_{\mathrm{Ar}}$ be the Laplacian on $L^2(X_v,\mu_v)$ given by putting $\partial \overline{\partial} f = \pi i \Delta_\mathrm{Ar}(f) \mu_v$ for $C^\infty$ functions $f$. Let $(\phi_\ell)_{\ell=0}^\infty$ be an orthonormal basis of real eigenfunctions of $\Delta_\mathrm{Ar}$, with corresponding eigenvalues $0=\lambda_0<\lambda_1\leq \lambda_2 \leq \ldots$. He then puts
\begin{equation} \label{defphi}
\varphi(X_v) = \sum_{\ell>0} \frac{2}{\lambda_\ell} \sum_{m,n=1}^g \left| \int_{X_v} \phi_\ell \, \omega_m \wedge \bar{\omega}_n \right|^2 \, , 
\end{equation}
where $(\omega_m)_{m=1}^g$ is an orthonormal basis for the inner product on $\mathrm{H}^0(X_v, \Omega^1_{X_v})$ given by putting $( \alpha,\beta )= \frac{i}{2} \int_{X_v} \alpha \wedge \overline{\beta}$. If $v$ is a non-archimedean place, he puts
\[ \varphi(X_v) = -\frac{1}{4} \delta(X_v) + \frac{1}{4} \int_{R_v} g_{\mu,v}(x,x) ((10g+2)\mu_v - \delta_{K_v}) \, , \]
where $\delta(X_v)$ is the number of singular points in the geometric special fiber of $X$ at $v$, $g_{\mu,v}$ is the Green's function \cite{zhadm}, Section 3 for the canonical metric $\mu_v$ on $R_v$, and $K_v$ is the canonical divisor on $R_v$.

Let $x_e$ be the class of the degree zero divisor $(2g-2)e-K$ in the jacobian of $X$, where $K$ is a canonical divisor on $X$, and denote by $\hat{h}(x_e)$ the N\'eron-Tate height of $x_e$ in $\rr$ (we warn the reader that our notation at this point differs slightly from the notation in \cite{zh}). Let $(\omega,\omega)_a$ be the admissible self-intersection of the relative dualizing sheaf \cite{zhadm}, Section 4 of $X$ over $k$. Define local factors $Nv$  as follows: if $v$ is a real embedding, then $Nv=\mathrm{e}$, if $v$ is a pair of complex embeddings, then $Nv=\mathrm{e}^2$, and if $v$ is non-archimedean, then $Nv$ is the cardinality of the residue field at $v$, provided this residue field is finite, and $Nv=\mathrm{e}$ else. Zhang's formula is then the following.
\begin{thm} (S. Zhang, Theorem 1.3.1 of \cite{zh}) The equality
\[ (\Delta_e,\Delta_e) = \frac{2g+1}{2g-2} (\omega,\omega)_a + \frac{3}{2g-2} \hat{h}(x_e) - \sum_v \varphi(X_v) \log Nv
\]
holds in $\rr$. 
\end{thm}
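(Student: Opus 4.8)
The plan is to compute the global Beilinson--Bloch height $(\Delta_e,\Delta_e)$ by decomposing it into a sum of local height pairings, one for each place $v$ of $k$, and to evaluate each local term by admissible Arakelov-style intersection theory on $X_v^3$. Gross and Schoen show in \cite{gs} that $\Delta_e$ lies in the kernel of the cycle class map (in fact, up to torsion, that it is algebraically trivial), so the Beilinson--Bloch formalism of \cite{beil} and \cite{bljpaa}, as set up for this situation in \cite{gs}, produces from a regular semistable model of $X^3$ over $\oo_k$ (or over the base curve, in the function-field case) together with admissible Green currents at the archimedean places a well-defined real number that expands as $(\Delta_e,\Delta_e)=\sum_v\langle\Delta_e,\Delta_e\rangle_v\log Nv$; the diagonal self-pairing is handled by a moving lemma or, equivalently, by the admissible Green currents. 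The decisive structural feature, already exploited in \cite{gs}, is that each of the seven cycles building $\Delta_e$ is the push-forward along a morphism $X\to X^3$ of the form $x\mapsto(x,x,p)$ or $x\mapsto(x,p,q)$, etc.; pulling back to $X$, every local contribution becomes a finite $\zz$-linear combination of integrals (resp.\ of lattice pairings on the reduction graph $R_v$) of products of pointwise Green's functions $g_v(x,y)$ on $X_v\times X_v$. I would then fix throughout the canonical \emph{admissible} Green's function $g_{\mu,v}$ attached to $\mu_v$, so that one canonical object governs all the local terms and the global height becomes the degree of a canonically metrized (``Bloch'') line bundle.

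As a preliminary reduction I would replace the degree-one divisor $e$ by the canonical $\qq$-divisor $\xi=K/(2g-2)$, i.e.\ compare $\Delta_e$ with $\Delta_\xi$. Modulo fibral cycles their difference is governed by the class $x_e=(2g-2)e-K$ in $\mathrm{Jac}(X)$, and propagating it through the bilinearity of the height pairing contributes exactly $\tfrac{3}{2g-2}\hat h(x_e)$, the N\'eron--Tate height arising because the admissible local Green's functions simultaneously compute the local components of the canonical height on $\mathrm{Jac}(X)$ and the intersection numbers occurring in $\Delta_e$; this step is essentially formal. It then remains to evaluate $\langle\Delta_\xi,\Delta_\xi\rangle_v$. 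Using adjunction for the triple diagonal (whose self-intersection on $X_v^2$, hence on $X_v^3$, is controlled by the relative dualizing sheaf) together with the defining identity $\partial\bar\partial\, g_{\mu,v}(x,\cdot)=\pi i(\delta_x-\mu_v)$, the ``diagonal'' contribution unwinds into the admissible self-intersection $(\omega,\omega)_a$ — this is where the genus-dependent factor $\tfrac{2g+1}{2g-2}$ is produced, tracking the passage from ordinary to admissible intersection under the normalization $\deg\xi=1$ — while the remaining cross-terms collapse to a purely local invariant of $X_v$.

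The crux is to identify this residual local invariant with $-\varphi(X_v)$. At a non-archimedean $v$ this is a finite computation on the metrized reduction graph: one expands the relevant combination of admissible Green's functions and matches it term by term with the expression in $\delta(X_v)$, $g_{\mu,v}$, the canonical divisor $K_v$ of $R_v$ and $\mu_v$, including the constant $10g+2$; the bookkeeping is intricate but elementary, and is the graph-combinatorial shadow of the archimedean harmonic analysis. At an archimedean $v$ the obstacle is genuinely analytic: the cross-terms take the shape of a double integral $\int_{X_v}\!\int_{X_v} g_{\mu,v}(x,y)\,(\omega_m\wedge\bar\omega_n)(x)\,(\omega_{m'}\wedge\bar\omega_{n'})(y)$, and one substitutes the spectral expansion $g_{\mu,v}(x,y)=\sum_{\ell>0}\lambda_\ell^{-1}\phi_\ell(x)\phi_\ell(y)$ of the Arakelov--canonical Green's function relative to the eigenbasis $(\phi_\ell)$ of $\Delta_{\mathrm{Ar}}$. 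This turns each such double integral into $\sum_{\ell>0}\lambda_\ell^{-1}\bigl(\int_{X_v}\phi_\ell\,\omega_m\wedge\bar\omega_n\bigr)\bigl(\int_{X_v}\phi_\ell\,\omega_{m'}\wedge\bar\omega_{n'}\bigr)$, and summing the contributions of all seven pieces of $\Delta_\xi$ and pairing indices via complex conjugation yields $\sum_{\ell>0}\tfrac{2}{\lambda_\ell}\sum_{m,n=1}^g\bigl|\int_{X_v}\phi_\ell\,\omega_m\wedge\bar\omega_n\bigr|^2$, which is precisely \eqref{defphi}.

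I expect the real difficulty to lie not in any single one of these computations but in the \emph{simultaneous matching of constants}: checking that the combinatorial coefficients produced by the seven-term alternating sum defining $\Delta_e$, together with the genus-dependent factors coming from the admissible intersection theory, line up at once with the $\tfrac{2g+1}{2g-2}$, $\tfrac{3}{2g-2}$ and $10g+2$ in the statement, and that the archimedean and non-archimedean formulas for $\varphi$ are genuinely two incarnations of one local object. The most economical way to organize this is to establish the identity first in the ``generic'' situation where $X$ has good reduction at every place — where the Bloch line bundle is the pullback of a single standard class — and then to read off all boundary corrections in one stroke; in the language of the present paper, that final bookkeeping is exactly what the computation of the Chern form of the Bloch line bundle and of its class in the Picard group of the moduli space of pointed stable curves of compact type accomplishes.
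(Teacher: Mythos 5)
The first thing to say is that the paper does not prove this statement at all: it is quoted verbatim from Zhang and attributed to \cite{zh}, Theorem 1.3.1. What the paper proves is a \emph{variant} (Theorem \ref{thmA}), namely an archimedean isomorphism of metrized line bundles over a complex base, obtained by a completely different route: normal functions into intermediate jacobians, the biextension line bundles $\bb_H$, $\bb_{\wedge^3H}$, $\bb_{\wedge^3H/H}$, the Fourier--Mukai comparison $\langle\Delta_e,\Delta_e\rangle^{\otimes 2}\isom\langle C_e,F(C_e)\rangle^{\otimes 3}$, and the Morita and Mumford isomorphisms with norms computed via \cite{dj} and \cite{fa}, \cite{mb}. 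Your proposal, by contrast, is a sketch of Zhang's \emph{original} proof: local decomposition of the Beilinson--Bloch height, reduction from $e$ to $\xi=K/(2g-2)$, admissible intersection theory, and the spectral expansion of the canonical Green's function at archimedean places. That is a legitimate and genuinely different strategy, and at the level of a roadmap the individual ingredients (the $\hat h(x_e)$ term from bilinearity, the $(\omega,\omega)_a$ term from adjunction along the diagonals, the spectral expansion $g_{\mu,v}(x,y)=\sum_{\ell>0}\lambda_\ell^{-1}\phi_\ell(x)\phi_\ell(y)$ producing \eqref{defphi}) are the right ones. But the heavy lifting --- the existence of a suitable regular model of $X^3$ making the local pairings well defined, the precise adjunction bookkeeping that produces $\tfrac{2g+1}{2g-2}$, and the non-archimedean graph computation yielding $10g+2$ --- is asserted rather than carried out, so as written this is an outline of \cite{zh}, not a proof.

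There is also one concretely wrong turn in your final paragraph. You propose to pin down all constants by first treating the everywhere-good-reduction case and then ``reading off all boundary corrections in one stroke'' from the Chern form and Picard-group computations of the present paper. Those computations (Theorems \ref{thmB} and \ref{thmC}) live on $\mm^c_{g,1}$ and $\Pic^{1,c}_g$, i.e.\ they only see degenerations of \emph{compact type}, and they are purely complex-geometric. They cannot produce the non-archimedean invariants $\varphi(X_v)$ at places where the reduction graph $R_v$ is nontrivial --- which is exactly the content you need, since $\varphi(X_v)$ vanishes for compact-type reduction. Indeed the paper explicitly states that the non-archimedean analogue of Theorem \ref{thmA}, which together with Theorem \ref{thmA} would yield Zhang's formula, is only a conjecture here. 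So the closing step of your plan does not close the argument; the non-archimedean local identification must be done by the direct reduction-graph computation you describe earlier (as in \cite{zh}), not by moduli-theoretic boundary classes on the compact-type locus.
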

The purpose of this paper is to prove a variant of Zhang's formula. The main aspect of our approach is that the archimedean contributions $\varphi(X_v)$ from Zhang's formula appear as a ``norm at infinity'' of a certain canonical isomorphism of line bundles related to $\Delta_e$, $\omega$ and $x_e$. Let $S$ be a quasiprojective variety over the complex numbers. Let $\pi \colon \xx \to S$ be a smooth projective family of curves of genus $g \geq 2$ over $S$, and assume that a flat divisor $e$ of relative degree one on $\xx/S$ is given. Let $\omega$ be the relative dualizing sheaf of $\xx/S$, and let $x_e$ be the   divisor class of relative degree zero on $\xx/S$ given by $(2g-2)e-c_1(\omega)$. Let $\langle \omega,\omega \rangle$ on $S$ be the Deligne pairing \cite{de} of $\omega$ with itself, and let $\langle x_e,x_e \rangle$ on $S$ be the Deligne pairing of $x_e$ with itself.
Let $\Delta_e$ be the flat relative Gross-Schoen cycle with base point $e$ on the triple self-fiber product of $\xx/S$. 

According to Bloch \cite{bl}, see also Section \ref{bloch} below, we have a canonical algebraic line bundle $\langle \Delta_e,\Delta_e \rangle$ associated to $\Delta_e$ on $S$. The formation of Bloch's line bundle is compatible with base change, and if $S$ is a smooth projective connected curve, then $\deg_S \langle \Delta_e,\Delta_e \rangle$ equals the height $(\Delta_e,\Delta_e)$ of the Gross-Schoen cycle on the generic fiber of $\xx/S$. 

Each of the algebraic line bundles $\langle \Delta_e,\Delta_e \rangle$, $\langle \omega,\omega \rangle$ and $\langle x_e,x_e \rangle$ is equipped with a canonical hermitian metric. For both Deligne pairings this is in \cite{de}, and for the Bloch pairing $\langle \Delta_e, \Delta_e \rangle$ we refer to Sections \ref{bloch} and \ref{connection_biext} below. Note that the archimedean $\varphi$-invariant defines a continuous real-valued function on $S$.
\begin{thm}  \label{thmA} There exists an isomorphism of algebraic line bundles
\[ \langle \Delta_e, \Delta_e \rangle^{\otimes 2g-2} \isom \langle \omega, \omega \rangle^{\otimes 2g+1} \otimes \langle x_e,x_e \rangle^{\otimes -3} \]
on $S$, canonical up to a sign, and having norm  $\exp(-(2g-2)\varphi)$.
\end{thm}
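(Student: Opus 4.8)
The plan is to realise Bloch's line bundle $\langle\Delta_e,\Delta_e\rangle$ as the biextension line bundle of a single normal function attached to the variation of Hodge structure $\mathbb{H}=R^1\pi_*\mathbb{Z}$ of $\xx/S$, to split that biextension along the Lefschetz decomposition of $\wedge^3\mathbb{H}$, and to match the two resulting factors against the Deligne pairings $\langle x_e,x_e\rangle$ and $\langle\omega,\omega\rangle$, tracking metrics throughout. First I would put in place the biextension formalism behind Sections \ref{bloch} and \ref{connection_biext}: to a polarised variation of Hodge structure $\mathbb{V}$ on $S$ and a normal function $\mu$ into its intermediate jacobian $J(\mathbb{V})$, pairing $\mu$ with itself along the polarisation produces a biextension variation of mixed Hodge structure, hence a canonical line bundle $\langle\mu,\mu\rangle$ on $S$; this is bi-additive, compatible with base change and with morphisms of polarised VHS, and carries a canonical metric, Beilinson's archimedean height pairing, whose Chern form equals $\|\delta\mu\|^2\ge 0$, with $\delta\mu$ the infinitesimal invariant of $\mu$ read off from the Gauss--Manin connection; in particular the Chern form of $\langle\Delta_e,\Delta_e\rangle$ is non-negative. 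Since $\Delta_e$ is, up to correspondences, the $\pi_1^{\otimes 3}$-component of the small diagonal in the triple self-fiber product of $\xx/S$, its Abel--Jacobi class lies in the intermediate jacobian of the $H^1\otimes H^1\otimes H^1$ part of the degree-three cohomology; as $\Delta_e$ is symmetric under the $S_3$-action, which here permutes the factors with the Koszul sign, this class lies in $J(\wedge^3\mathbb{H})$ up to a Tate twist. So $\Delta_e$ defines a normal function $\nu$ into $J(\wedge^3\mathbb{H})$, closely tied to the Ceresa normal function of $[\xx_e]-[\xx_e^-]$ (with Abel--Jacobi taken with base point $e$), and $\langle\Delta_e,\Delta_e\rangle\cong\langle\nu,\nu\rangle$.

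Next I would split $\nu$. The polarisation of $\mathbb{H}$ furnishes $\theta\in\wedge^2\mathbb{H}^\vee$, hence the Lefschetz decomposition $\wedge^3\mathbb{H}\cong\mathbb{H}(-1)\oplus\wedge^3_{\mathrm{prim}}\mathbb{H}$, orthogonal for the induced polarisation. Write $\nu=\nu_1+\nu_0$ accordingly: $\nu_1$, the contraction of $\nu$ against $\theta$, is a normal function into the family of jacobians which by the Gross--Schoen computation is a fixed multiple of the section cut out by $x_e=(2g-2)e-c_1(\omega)$, whereas $\nu_0$ is the primitive, genuinely new part — and $\nu_0$ is independent of the base point $e$, the base-point dependence of the Ceresa class being concentrated in $\nu_1$. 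By bi-additivity and orthogonality the biextension splits canonically, $\langle\nu,\nu\rangle\cong\langle\nu_1,\nu_1\rangle\otimes\langle\nu_0,\nu_0\rangle$ with trivial cross term, each factor keeping its canonical metric.

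Now I would identify the factors. The factor $\langle\nu_1,\nu_1\rangle$ is the line-bundle incarnation of the N\'eron--Tate height pairing on the family of jacobians applied to $x_e$; tracking the $\theta$-contraction constants, $\langle\nu_1,\nu_1\rangle^{\otimes 2g-2}$ is $\langle x_e,x_e\rangle^{\otimes -3}$ with its Deligne metric, the identification being an isometry. The hard factor is $\langle\nu_0,\nu_0\rangle$, for which the target is $\langle\nu_0,\nu_0\rangle^{\otimes 2g-2}\isom\langle\omega,\omega\rangle^{\otimes 2g+1}$, canonical up to a sign, of norm $\exp(-(2g-2)\varphi)$. For the isomorphism of algebraic line bundles I would first match first Chern classes — the Chern form $\|\delta\nu_0\|^2$ of the biextension metric, computed from the Gauss--Manin connection on $\wedge^3_{\mathrm{prim}}\mathbb{H}$, against the curvature of the Deligne metric on $\langle\omega,\omega\rangle$ (equivalently, via the Mumford isomorphism on the smooth locus, twelve times the first Chern form of the Hodge bundle $\lambda=\det\pi_*\omega$) — so that the difference of the two line bundles has trivial first Chern class, hence is of finite order after pullback to a level cover of the moduli of pointed curves, where the Picard group is finitely generated; the finite order and the sign are then pinned down by degenerating toward the boundary of compact type, which is what restricts the later globalisation to the compact-type locus. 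For the norm I would evaluate the biextension metric directly: Beilinson's archimedean height pairing on $\wedge^3_{\mathrm{prim}}\mathbb{H}$ of a fiber $X_s$ is expressed through the canonical Green's function of $X_s$, and expanding that Green's function against the spectral resolution of the Arakelov Laplacian reproduces the sum (\ref{defphi}); comparison with the Deligne metric on $\langle\omega,\omega\rangle$ leaves precisely the discrepancy $\exp(-(2g-2)\varphi)$. Tensoring the two factors yields Theorem \ref{thmA}.

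The step that carries the weight is the primitive factor $\langle\nu_0,\nu_0\rangle$, where two things must be done compatibly: producing a genuine isomorphism of algebraic line bundles $\langle\nu_0,\nu_0\rangle^{\otimes 2g-2}\isom\langle\omega,\omega\rangle^{\otimes 2g+1}$ — which, the Ceresa cycle being in general nontrivial modulo algebraic equivalence, is not visible from any evident triviality and really forces the Chern-class-plus-torsion analysis over moduli, hence the compact-type restriction — and recognising the surviving metric discrepancy as exactly Zhang's archimedean invariant $\varphi$, which requires the transcendental evaluation of the biextension metric on $\wedge^3_{\mathrm{prim}}\mathbb{H}$ and its identification with the spectral sum (\ref{defphi}). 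Everything else — the biextension formalism and its formal properties, the orthogonality of the Lefschetz decomposition, the N\'eron--Tate interpretation of the $\mathbb{H}(-1)$-part, the non-negativity of the Chern form, and the bookkeeping of the powers of $\langle\omega,\omega\rangle$ and $\langle x_e,x_e\rangle$ — follows once the framework of Sections \ref{bloch} and \ref{connection_biext} is available.
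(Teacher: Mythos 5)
Your architecture is the right one and matches the paper's: pass from $\Delta_e$ to a normal function into $\jj(\wedge^3 H)$ tied to the Ceresa cycle, split off the $H$-part of $\wedge^3 H$ via contraction against the polarization, identify that part with the N\'eron--Tate pairing of $x_e$, and identify the primitive part with $\langle\omega,\omega\rangle^{\otimes 2g+1}$ up to the metric discrepancy $\exp(-(2g-2)\varphi)$. But there are two concrete gaps. First, the assertion $\langle\Delta_e,\Delta_e\rangle\cong\langle\nu,\nu\rangle$ is not correct as stated, and the error is not cosmetic: the actual relation is $\langle\Delta_e,\Delta_e\rangle^{\otimes 2}\isom\langle C_e,F(C_e)\rangle^{\otimes 3}$, which rests on the cycle identities $f_*\Delta_e\equiv 3C_e$ (Colombo--van Geemen) and $f^*F(C_e)\equiv 2\Delta_e$ (Zhang) for the summation map $f\colon\xx\times_S\xx\times_S\xx\to\jj$, together with the projection formula for Bloch's pairing and Beauville's compatibility of the Fourier--Mukai transform with the polarization under Abel--Jacobi. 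The resulting factor $3/2$, combined with the factor $g-1$ forced by the integral identity $(g-1)Q_{\wedge^3 H}=p^*Q_{\wedge^3H/H}+c^*Q_H$ (your Lefschetz splitting is only a rational, not an orthogonal integral, decomposition), is exactly what produces the exponents $2g-2$, $2g+1$ and $-3$ in the theorem. Your version, taken literally, yields $\langle x_e,x_e\rangle^{\otimes -(2g-2)}$ rather than $\langle x_e,x_e\rangle^{\otimes -3}$; "tracking the $\theta$-contraction constants" cannot repair this, because the missing $3/2$ lives in the Chow groups, not in the polarization.

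Second, on the primitive factor your route to the algebraic isomorphism ($c_1$-matching, torsion in the Picard group of a level cover, normalization at the boundary) is a legitimate alternative --- it essentially reproves Morita's theorem $\nu^*\bb_{\wedge^3H/H}\isom(\det\pi_*\omega)^{\otimes 8g+4}$, which the paper imports from Hain--Reed and normalizes instead via the canonical trivializations of both sides over the hyperelliptic locus (where $\nu$ vanishes); your boundary normalization would need to be made precise to yield a sign-canonical isomorphism. The real weight, as you say, is the norm: the identification of the metric discrepancy with $\exp(-(2g-2)\varphi)$ is asserted via "expanding the Green's function against the spectral resolution of the Arakelov Laplacian," but this is a substantial transcendental computation --- in the paper it is precisely the main theorem of \cite{dj} (the norm of the Morita isomorphism equals $\exp(-(8g+4)\lambda)$ with $\lambda=\frac{g-1}{6(2g+1)}\varphi+\frac{1}{12}\delta$), combined with the Faltings--Moret-Bailly evaluation $\exp(\delta)$ of the norm of the Mumford isomorphism. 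Without that input, or an actual execution of the spectral computation, the norm statement in the theorem remains unproved.
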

To prove Theorem \ref{thmA} we reduce to the universal case where $S$ is the complex moduli space $\Pic^1_g$ of pairs $(X,e)$ consisting of a complex smooth projective connected curve $X$ of genus $g$ together with a divisor class $e$ of degree one. We conjecture that an analogue of Theorem \ref{thmA} holds for quasiprojective varieties over complete discretely valued fields, featuring the non-archimedean $\varphi$-invariant introduced above. In combination with Theorem \ref{thmA}, such an analogue would yield Zhang's theorem as an immediate corollary.

We prove Theorem \ref{thmA} by considering normal functions and biextensions on a fibration of intermediate jacobians over $\Pic^1_g$. Our approach is rather different from the one pursued in Zhang's original paper \cite{zh}. We think that as a result of our approach the Hodge theoretic and motivic aspects of Zhang's paper become more visible. For example, the motive $M$ appearing in Section 5 of \cite{zh} corresponds to the intermediate jacobian $\jj(\wedge^3 H/H)$ studied in Section \ref{variation} below.

The hermitian line bundle $\langle \Delta_e,\Delta_e \rangle$ on $\Pic^1_g$ has a unique extension as a hermitian line bundle over $\Pic^{1,c}_g$, the universal Picard variety of degree one over the moduli space $\mm_g^c$ of stable curves of compact type. We denote this extension over $\Pic^{1,c}_g$ by the same symbol.
\begin{thm} \label{thmB} The line bundle $\langle \Delta_e,\Delta_e \rangle$ has non-negative Chern form and hence is nef on $\Pic^{1,c}_g$.
\end{thm}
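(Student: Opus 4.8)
Recall from Sections~\ref{bloch}--\ref{connection_biext} that over $\Pic^1_g$ the relative Gross--Schoen cycle $\Delta_e$ determines a normal function $\nu$ with values in the fibration $\jj=\jj(\wedge^3 H/H)$ of intermediate jacobians of the polarized variation of Hodge structure $\wedge^3 H/H$ of weight $-1$ from Section~\ref{variation}, and that the hermitian line bundle $\langle\Delta_e,\Delta_e\rangle$ is, canonically up to sign, the pull-back along $\nu$ of the biextension line bundle on $\jj$ — the self-pairing, via the polarization, of the Poincar\'e biextension on $\jj\times_{\Pic^1_g}\jj$ — with its natural metric. Thus the Chern form of $\langle\Delta_e,\Delta_e\rangle$ is $\nu^*\Omega$, where $\Omega$ denotes the Chern form of the metrized biextension line bundle on $\jj$. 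The plan is to show that $\nu^*\Omega\geq 0$, and then to propagate this over the compact type boundary.

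For the non-negativity: restricted to a fibre $J_s$ of $\jj\to\Pic^1_g$, the form $\Omega$ is the translation-invariant $(1,1)$-form attached to the hermitian form on $T_0 J_s=(\wedge^3 H/H)^{-1,0}_s$ determined by the polarization, and this hermitian form is positive definite by the second Hodge--Riemann bilinear relation for the weight $-1$ polarized Hodge structure; so $\Omega$ is already semi-positive in the fibre directions. To handle the remaining directions one uses that $\nu$ is horizontal as a normal function: its derivative, read against the Hodge filtration, is governed by the infinitesimal invariant $\delta\nu$, and unwinding the curvature formula for the biextension metric with the connection of Section~\ref{connection_biext} should exhibit $\nu^*\Omega$ as the sum of that fibrewise positive term and a Hodge norm of the pertinent component of $\delta\nu$ — a manifestly non-negative $(1,1)$-form. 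I expect this computation to be the main obstacle: the fibrewise positivity is immediate, but assembling the mixed contributions into a non-negative expression uses Griffiths transversality of $\nu$ in an essential way.

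Finally, over the compact type locus $\mm_g^c$ the jacobian of the universal curve remains an abelian scheme, so $H=R^1\pi_*\zz$ extends to a pure polarized variation of Hodge structure over $\mm_g^c$; hence $\wedge^3 H/H$, the fibration $\jj$, the normal function $\nu$ and the metrized biextension line bundle all extend over $\Pic^{1,c}_g$, compatibly with the hermitian extension of $\langle\Delta_e,\Delta_e\rangle$ fixed before the statement of the theorem. The Chern form of this extension is then again of the shape $\nu^*\Omega$; being continuous and, by the previous step, non-negative on the dense open subset $\Pic^1_g$, it is non-negative everywhere on $\Pic^{1,c}_g$. Since a line bundle carrying a smooth hermitian metric with semi-positive Chern form has non-negative degree on every complete curve — the degree being the integral of the restricted Chern form — the line bundle $\langle\Delta_e,\Delta_e\rangle$ is nef on $\Pic^{1,c}_g$.
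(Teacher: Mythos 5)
Your overall strategy --- realize $\langle\Delta_e,\Delta_e\rangle$, up to a tensor power, as the pullback of a metrized biextension line bundle along a normal function, invoke the non-negativity of such pullbacks, and spread the inequality over the compact-type locus by continuity --- is indeed the strategy of the paper. But your first step misidentifies both the normal function and the target intermediate jacobian, and this is a genuine error rather than a notational slip. The pairing $\langle\Delta_e,\Delta_e\rangle$ is \emph{not} the pullback along $\nu\colon\Pic^1_g\to\jj(\wedge^3H/H)$ of the biextension bundle: $\nu$ factors through $\mm_g$, so $\nu^*$ of anything is independent of the degree-one class $e$, whereas $\langle\Delta_e,\Delta_e\rangle$ depends on $e$ in an essential way (its class on $\mm^c_{g,1}$ involves $\psi$, cf.\ Theorem \ref{thmC}, while $\nu^*\bb_{\wedge^3H/H}$ does not). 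The correct statement, Proposition \ref{GSaspullback}, is
\[ \langle\Delta_e,\Delta_e\rangle^{\otimes 2}\ \isom\ \mu^*\bigl(\bb_{\wedge^3H}^{\otimes 3}\bigr) , \]
where $\mu$ is the extended pointed harmonic volume into $\jj(\wedge^3H)$, i.e.\ the Abel--Jacobi image of the Ceresa cycle $C_e=X_e-X_e^-$. Establishing this is not formal: it rests on $f_*\Delta_e\equiv 3C_e$ (Colombo--van Geemen) and $f^*F(C_e)\equiv 2\Delta_e$ (Zhang) for the sum map $f\colon\yy\to\jj$ and the Fourier--Mukai transform $F$, combined with the projection formula for Bloch's pairing and its comparison with the Poincar\'e biextension (Propositions \ref{first} and \ref{second}). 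Your proposal skips all of this. The relation actually involving $\nu$ is $\langle\Delta_e,\Delta_e\rangle^{\otimes 2g-2}\isom\nu^*\bb_{\wedge^3H/H}^{\otimes 3}\otimes\kappa^*\bb_H^{\otimes 3}$; one could salvage your route by applying the positivity statement to both $\nu$ and $\kappa$, but that still requires the decomposition of Theorem \ref{thmA}, which you have not established.

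Second, the analytic heart of the matter --- that the pullback of the canonical invariant $(1,1)$-form along \emph{any} normal function is non-negative --- you only sketch, and you yourself flag the computation as ``the main obstacle'' without carrying it out. This is exactly Theorem \ref{hain} (Hain's Theorem 13.1), and it is the one place where holomorphy and Griffiths transversality of the section enter; fibrewise positivity of $w_V$ by itself gives nothing, since $w_V$ vanishes along the zero section and need not be semi-positive on the total space of the torus fibration. Your heuristic (the pullback should be a Hodge norm of the derivative of the normal function, positive by the second Hodge--Riemann relation) points in the right direction, but as written it is an announcement of a proof, not a proof; citing Hain's result would have sufficed. The remaining steps --- uniqueness and continuity of the hermitian extension over $\Pic^{1,c}_g$, and deducing nefness from semi-positivity of the Chern form by integrating over complete curves --- are fine.
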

As a corollary of Theorem \ref{thmB} we obtain the non-negativity of the height of the Gross-Schoen cycle for curves over function fields in characteristic zero with everywhere stable reduction of compact type. We note that this also follows from \cite{zh2} by an application of the Hodge Index Theorem.

To finish, we compute the first Chern class of the Bloch line bundle $\langle \Delta_e,\Delta_e \rangle$ restricted to the universal $1$-pointed stable curve of compact type $\mm^c_{g,1}$. Let $\psi$ be the first Chern class of the pullback, along the canonical section, of the relative dualizing sheaf of the universal curve over $\mm^c_{g,1}$, and let $\lambda$ be the first Chern class of the Hodge bundle. For $1\leq i \leq g-1$ an integer let $\Delta^{ \{x\} }_i$ be the boundary divisor in $\mm_{g,1}^c$ of which the generic point is a reducible curve with precisely one node and two smooth irreducible components, one of genus $i$ and the other of genus $g-i$, such that the marked point lies on the genus $i$ component minus the node. Let $\delta^{ \{x\} }_i$ be the class of $\Delta^{ \{x\} }_i$ in the Picard group of $\mm_{g,1}^c$. It is known \cite{ac} that $\mathrm{Pic}(\mm_{g,1}^c)$ is generated by $\psi$, $\lambda$ and the $\delta^{ \{x\} }_i$ (and freely so if $g \geq 3$). 
\begin{thm} \label{thmC} We have the equality
\[ \langle \Delta_e,\Delta_e \rangle = 6g \, \psi + 12 \, \lambda - \sum_{i=1}^{g-1} 6i\, \delta_{g-i}^{ \{x\} }  \]
in $\mathrm{Pic}(\mm_{g,1}^c)$.
\end{thm}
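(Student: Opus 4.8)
The plan is to read the class off over the locus of smooth curves from Theorem~\ref{thmA}, and then to pin down the boundary coefficients by a degeneration analysis.

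Over the open substack $\mm_{g,1}\subset\mm_{g,1}^c$ of smooth pointed curves the universal family is smooth and $e$ is the universal section, so Theorem~\ref{thmA} applies as stated and, passing to first Chern classes,
\[ (2g-2)\,\langle\Delta_e,\Delta_e\rangle = (2g+1)\,\langle\omega,\omega\rangle - 3\,\langle x_e,x_e\rangle \]
in $\mathrm{Pic}(\mm_{g,1})$. Here $\langle\omega,\omega\rangle$ represents $\kappa_1$, which by Mumford's formula equals $12\lambda$ over $\mm_{g,1}$ (the singular locus of the universal curve being empty there). Expanding $x_e=(2g-2)e-c_1(\omega)$ by biadditivity of the Deligne pairing \cite{de}, and using that $\langle\mathcal{O}(e),\omega\rangle=e^*\omega$ has class $\psi$ while $\langle\mathcal{O}(e),\mathcal{O}(e)\rangle=e^*\mathcal{O}(e)=(e^*\omega)^{-1}$ has class $-\psi$ (the conormal sheaf of the section being $e^*\omega$), one obtains $\langle x_e,x_e\rangle=-2g(2g-2)\,\psi+12\lambda$. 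Substituting and dividing by $2g-2$ — legitimate as $\mathrm{Pic}(\mm_{g,1})$ is torsion-free for $g\geq3$, the case $g=2$ being treated directly — gives
\[ \langle\Delta_e,\Delta_e\rangle = 6g\,\psi + 12\,\lambda \]
on $\mm_{g,1}$. (Restricting further to the fibre of $\mm_{g,1}\to\mm_g$ over a general $X$, where $\psi\mapsto c_1(\omega_X)$ and $\lambda\mapsto0$, yields degree $6g(2g-2)$, in agreement with evaluating the right-hand side of Theorem~\ref{thmA} directly on the constant family $X\times X\to X$; this fixes the coefficients $6g$ and $12$.)

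It remains to determine the $c_i$ in $\langle\Delta_e,\Delta_e\rangle=6g\,\psi+12\,\lambda+\sum_{i=1}^{g-1}c_i\,\delta_i^{\{x\}}$ in $\mathrm{Pic}(\mm_{g,1}^c)$; as these classes generate $\mathrm{Pic}(\mm_{g,1}^c)$ \cite{ac}, this completes the proof. I would extract $c_i$ by restricting along a transversal one-parameter test family $\mathcal{Y}\to B$ of compact-type stable curves over a smooth complete $\CC$-curve $B$, with smooth generic fibre and a single singular fibre over $b_0$ carrying one separating node of genus type $(i,g-i)$ whose genus-$i$ component carries the marked section. On one side $\deg_B\langle\Delta_e,\Delta_e\rangle=6g\deg_B\psi+12\deg_B\lambda+c_i$; on the other, this degree is the Beilinson--Bloch height of the Gross--Schoen cycle over $\CC(B)$, which by Zhang's formula (Theorem~1.3.1 of \cite{zh}) equals $\tfrac{2g+1}{2g-2}(\omega,\omega)_a+\tfrac{3}{2g-2}\hat h(x_e)-\varphi(X_{b_0})$, since $\CC(B)$ has no archimedean place, only $b_0$ contributes, and $\log Nv=1$ there. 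The terms $(\omega,\omega)_a$ and $\hat h(x_e)$ unwind into $\deg_B\langle\omega,\omega\rangle$, $\deg_B\langle x_e,x_e\rangle$ and the local admissible corrections at $b_0$, while a direct computation on the reduction graph — a bridge of length one — gives $\varphi(X_{b_0})=2i(g-i)/g$. Assembling these contributions, the symmetric parts cancel against the interior value and the asymmetry — which enters only through the position of $e$ on the genus-$i$ component — is what yields $c_i=-6(g-i)$, i.e.\ $\sum_i c_i\,\delta_i^{\{x\}}=-\sum_{i=1}^{g-1}6i\,\delta_{g-i}^{\{x\}}$, as in Theorem~\ref{thmC}.

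I expect this assembly to be the main obstacle: controlling the boundary behaviour of $\langle\Delta_e,\Delta_e\rangle$ at a separating node meeting the marked point, and in particular producing the term that distinguishes $\delta_i^{\{x\}}$ from $\delta_{g-i}^{\{x\}}$. In the language of this paper this means analysing, along $\Delta_i^{\{x\}}$, the degeneration of the variation of Hodge structure on $\wedge^3 H/H$ (Section~\ref{variation}), of the normal function attached to $\Delta_e$, and of the associated biextension line bundle with its metric (Sections~\ref{bloch} and \ref{connection_biext}): the monodromy on $H^1$ is trivial there, so the normal function extends, but the admissible metrics on $\langle\Delta_e,\Delta_e\rangle$, $\langle\omega,\omega\rangle$ and $\langle x_e,x_e\rangle$ acquire logarithmic singularities whose combination, measured against the norm $\exp(-(2g-2)\varphi)$ of the isomorphism of Theorem~\ref{thmA}, supplies the boundary defect. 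Equivalently one could upgrade Theorem~\ref{thmA} to an isomorphism over $\Pic^{1,c}_g$ twisted by an explicit boundary divisor and read off $c_i$ at once; either way, this degeneration computation is the substantive point, the remainder being formal manipulation of Deligne pairings together with Mumford's formula.
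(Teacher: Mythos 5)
Your computation over the interior is correct: combining Theorem~\ref{thmA} with $\langle\omega,\omega\rangle=12\lambda$, $\langle e,e\rangle=-\psi$, $\langle e,\omega\rangle=\psi$ does give $\langle x_e,x_e\rangle=-2g(2g-2)\psi+12\lambda$ and hence $\langle\Delta_e,\Delta_e\rangle=6g\psi+12\lambda$ on $\mm_{g,1}$, which agrees with the theorem away from the boundary. The decisive issue, however, is the boundary coefficients, and there your argument has a genuine gap. You set $\deg_B$ of the extended line bundle $\langle\Delta_e,\Delta_e\rangle$ equal to the Beilinson--Bloch height of $\Delta_e$ over $\CC(B)$. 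The paper establishes this equality only for \emph{smooth} families $\xx\to S$; over $\Pic^{1,c}_g$ the bundle $\langle\Delta_e,\Delta_e\rangle$ is \emph{defined} in Section~\ref{furtherextension} purely Hodge-theoretically, as the unique hermitian extension of $\mu^*\bb_{\wedge^3 H}^{\otimes 3/2}$ along the extended normal function, and nothing in the paper relates the degree of this extension along a degenerating family to the intersection-theoretic height computed on a regular model of $\xx_B^3$. That comparison at the non-archimedean place $b_0$ is precisely the content of the non-archimedean analogue of Theorem~\ref{thmA} which the introduction explicitly leaves as a conjecture (and which, combined with Theorem~\ref{thmA}, would reprove Zhang's formula). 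So your test-curve step uses an unproven local comparison as if it were available. On top of this, the local ingredients you invoke --- the value of $\varphi$ on the segment graph, the local parts of $(\omega,\omega)_a$ and of $\hat h(x_e)$ that are supposed to produce the asymmetry between $\delta_i^{\{x\}}$ and $\delta_{g-i}^{\{x\}}$ --- are asserted rather than computed, and they are the entire content of the coefficient $c_i$; as written, the answer $c_i=-6(g-i)$ is not derived but imported from the statement being proved.

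The paper's route avoids any degeneration analysis of heights. From Proposition~\ref{GSaspullback} one has $2\,\langle\Delta_e,\Delta_e\rangle=3\,\mu^*\bb_{\wedge^3 H}$ in $\mathrm{Pic}(\mm_{g,1}^c)$, and from Proposition~\ref{equality2biextensions} one has $(g-1)\,\mu^*\bb_{\wedge^3 H}=\nu^*\bb_{\wedge^3 H/H}+\kappa^*\bb_H$; the two classes on the right are then taken, boundary terms included, from Theorem~1.3 of \cite{hrar} and Theorem~10.2 of \cite{hain_normal}, whose proofs carry out exactly the extension analysis of the normal functions $\nu$ and $\kappa$ across $\Delta_i^{\{x\}}$ that you defer. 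To repair your argument within the paper's framework you should replace the appeal to Zhang's arithmetic formula by one of these two boundary class computations (given your interior formula and the relation above, either one determines the other and hence all the $c_i$), or else carry out the local study of $\mu$ and of the biextension metric along $\Delta_i^{\{x\}}$ directly; the arithmetic degeneration route you sketch is not available without first proving the paper's open conjecture.
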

Observe that the class of $\langle \Delta_e,\Delta_e \rangle$ in the Picard group of $\mm_{g,1}^c$ is divisible by $6$. It would be interesting to have a description of a natural sixth root of $\langle \Delta_e,\Delta_e \rangle$.  

The set-up of the paper is as follows. We discuss some generalities on Bloch's pairing, biextensions and variations of polarized Hodge structure in Sections \ref{bloch}, \ref{connection_biext} and \ref{variation}, respectively. In Sections \ref{normal} and \ref{extension} we study normal functions associated to the well known Ceresa cycle in the jacobian of a curve. The connection with the Gross-Schoen cycle is made in Section \ref{proofthmA}, which also contains a proof of Theorem \ref{thmA}. Theorems \ref{thmB} and \ref{thmC} are proved in Section \ref{furtherextension}.

\section{Bloch's pairing between cycle classes} \label{bloch}

We begin by reviewing the construction, due to S. Bloch \cite{bl}, of a canonical algebraic line bundle $\langle z,w \rangle$ associated to a pair of relatively homologically trivial algebraic cycle classes $z,w$ on a family of smooth complex projective varieties. In Section \ref{connection_biext} we discuss an alternative formulation of essentially the same pairing in terms of biextensions and normal functions on the associated family of Griffiths intermediate jacobians. For a more extensive discussion of the topic, proofs and broader generality we refer to the sources \cite{mey}, \cite{ms}, \cite{sei} and of course \cite{bl}.

Let $S$ be a quasiprojective variety over $\CC$. Consider a smooth projective morphism $\pi \colon \xx \to S$ of relative dimension $d$ over $S$. For any positive integer $p \leq d$ let $\CH_o^p(\xx/S)$ denote the abelian sheaf (for the \'etale topology on $S$) of codimension~$p$ cycles on $\xx$, modulo rational equivalence, restricting to a homologically trivial cycle in each of the fibers of $\xx/S$. Let $\textbf{Pic}(S)$ be the groupoid of algebraic line bundles on $S$, and let $p,q$ be positive integers such that $p+q=d+1$.

In \cite{bl} a canonical pairing $\langle \cdot,\cdot \rangle \colon \CH^p_o(\xx/S) \times \CH^q_o(\xx/S) \to \textbf{Pic}(S)$ is constructed. Omitting most of the details (for which we refer to the above mentioned references), the construction of this pairing can be summarized as follows. Let $z,w$ be classes in $\CH^p_o(\xx/S)$ and $\CH^q_o(\xx/S)$, respectively. Then for any non-empty Zariski open subset $U$ of $S$, locally over $U$ the line bundle $\langle z, w\rangle$ is generated by symbols $\langle Z,W \rangle$, where $Z,W$ are representatives of $z,w$ over $U$, with disjoint support, such that the following relations hold.

Let $\langle Z,W \rangle $ and $\langle Z,W' \rangle$ be local symbols over $U$ and suppose that $W,W'$ are rationally equivalent over $U$. This essentially means that there exists a cycle $Y$ on $\xx_U$ and a rational function $f$ on $Y$ such that $W-W'=\mathrm{div}_Y(f)$; we assume we can choose $Y$ such that $Y$ and $Z$ intersect properly. Consider then the intersection cycle $Y. Z$; we assume that the support of $Y.Z$ is finite flat over $S$. Note that as $Z,W$ and $Z,W'$ have disjoint support the rational function $f$ restricts to an invertible regular function on $Y. Z$. Let $\sigma_Z(W-W')$ in $\oo_S^*(U)$ be the result of applying the norm map from $Y. Z$ to $S$ to the restriction of $f$ to $Y.Z$; then we put $\langle Z,W \rangle = \sigma_Z(W-W') \cdot \langle Z,W' \rangle$.

Likewise, if $\langle Z,W \rangle $ and $\langle Z',W \rangle$ are local symbols over $U$ with $Z,Z'$ rationally equivalent over $U$, with a cycle $Y'$ on $\xx_U$ intersecting $W$ properly and a rational function $g$ on $Y'$ such that $Z-Z'=\mathrm{div}_{Y'}(g)$, we put $\langle Z,W \rangle = \sigma_W(Z-Z') \cdot \langle Z',W \rangle$, where $\sigma_W(Z-Z')$ in $ \oo_S^*(U)$ is the result of applying the norm map from $Y'.W$ to $S$ to the invertible regular function obtained by restricting $g$ to $Y'.W$.

We mention the cocycle relations
\[ \sigma_Z(W-W') \cdot \sigma_{W'}(Z-Z') = \sigma_W(Z-Z') \cdot \sigma_{Z'}(W-W') \]
in $\oo_S^*(U)$ for $Z,W,Z',W'$ as above.

In \cite{bl}, a `torseur' $\mathbb{E}$ is constructed over the product $\CH^p_o(\xx/S) \times \CH^q_o(\xx/S)$. The above $\langle z, w\rangle$ are the fibers of this torseur. The various properties of the torseur $\mathbb{E}$ discussed in \cite{bl} readily yield the following proposition.
\begin{prop} \label{basic}
(a) (Symmetry) Let $z,w$ be in $\CH^p_o(\xx/S)$ and $\CH^q_o(\xx/S)$, respectively. Then there exists a canonical isomorphism $\langle z,w \rangle \isom \langle w,z \rangle$ of line bundles on $S$. \\
(b) (Bi-additivity) Let $z,z'$ be elements of $\CH^p_o(\xx/S)$ and let $w,w'$ be elements of $\CH^q_o(\xx/S)$. Then there are canonical isomorphisms
\[ \langle z+z', w \rangle \isom \langle z,w \rangle \otimes \langle z', w \rangle \, , \qquad
\langle z, w +w'\rangle \isom \langle z,w \rangle \otimes \langle z, w' \rangle \]
of line bundles on $S$.\\
(c) (Base change) The formation of $\langle z,w \rangle$ is compatible with base change.\\
(d) (Projection formula) Let $\xx/S$ and $\yy/S$ be smooth projective families with $\xx/S$ of relative dimension $d$. Let $f \colon \xx \to \yy$ be a proper flat morphism of $S$-varieties. Let $z \in \CH^p_o(\xx/S)$ and let $w \in \CH^{d+1-p}_o(\yy/S)$. Then there exists a canonical isomorphism $\langle f_*z,w \rangle \isom \langle z, f^* w \rangle$ of line bundles on $S$.\\
\end{prop}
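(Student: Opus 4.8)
The statement is, as the text indicates, a formal consequence of the construction of the torsor $\mathbb{E}$ in \cite{bl}, and the plan is simply to record the verification on the level of local symbols. Recall that to produce an isomorphism $\langle z_1,w_1\rangle\isom\langle z_2,w_2\rangle$ between Bloch line bundles it suffices, after passing to a common Zariski open cover of $S$, to give a rule matching local generators $\langle Z_1,W_1\rangle\leftrightarrow\langle Z_2,W_2\rangle$ compatible with the two types of defining relation (rational equivalence in either slot) and with shrinking $U$; well-definedness and gluing over $S$ then follow from the fact that $\mathbb{E}$ is a torsor. The only external inputs are standard properties of the norm map along a finite flat morphism — multiplicativity over a sum of supports, compatibility with base change, transitivity along compositions, and the formula $N_{A/B}(h^*u)=u^{\deg(A/B)}$ for a pulled-back unit — together with the projection formula $f_*(Z.f^*W)=(f_*Z).W$ and the identity $f_*\,\mathrm{div}_Y(g)=\mathrm{div}_{f(Y)}(\mathrm{Nm}(g))$ for proper $f$ from intersection theory.

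For (a), I would send $\langle Z,W\rangle$ to $\langle W,Z\rangle$. A relation $\langle Z,W\rangle=\sigma_Z(W-W')\langle Z,W'\rangle$ is then carried to $\langle W,Z\rangle=\sigma_Z(W-W')\langle W',Z\rangle$, which is a defining relation on the target with the slots interchanged, and symmetrically for relations of the second type; that the two kinds of change remain compatible under this rule is exactly the stated cocycle identity. (Whether this isomorphism is canonical on the nose or only up to a sign is a finer matter, tied to the skew-symmetry of the autoduality on the relevant odd-degree cohomology — compare Theorem \ref{thmA}; the mere existence asserted in Proposition \ref{basic} is unaffected.) For (b), a moving-lemma argument first yields representatives $Z,Z',W$ over $U$ with pairwise disjoint support and $Z,Z'$ meeting all auxiliary cycles properly; then $Z+Z'$ represents $z+z'$, and sending $\langle Z+Z',W\rangle\mapsto\langle Z,W\rangle\otimes\langle Z',W\rangle$ respects the relations because $Y.(Z+Z')=Y.Z+Y.Z'$ as cycles and the norm over a sum of supports factors accordingly, so that $\sigma_W(\cdot)$ and $\sigma_{Z+Z'}(\cdot)$ split as needed. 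The second isomorphism in (b) is obtained the same way with the slots reversed, or by combining the first with (a). Part (c) is immediate: local symbols, the cycles and rational functions occurring in the relations, and the relevant norm maps all pull back along a base change $S'\to S$, compatibly.

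The part I expect to require genuine care is (d). Given $f\colon\xx\to\yy$ proper flat over $S$, I would pick representatives $W$ of $w$ on $\yy$ and $Z$ of $z$ on $\xx$ with $\mathrm{Supp}(Z)\cap\mathrm{Supp}(f^*W)=\emptyset$ and $\mathrm{Supp}(f_*Z)\cap\mathrm{Supp}(W)=\emptyset$, and send $\langle f_*Z,W\rangle\mapsto\langle Z,f^*W\rangle$. To see this respects a relation of the first type, change $W$ to $W'$ by $W-W'=\mathrm{div}_Y(\phi)$ on $\yy$; flatness of $f$ gives $f^*W-f^*W'=\mathrm{div}_{f^*Y}(f^*\phi)$, and then
\[
\sigma_Z(f^*W-f^*W')=\prod_{\beta} N_{\beta/S}\bigl(f^*\phi|_\beta\bigr)^{m_\beta}=\prod_{\alpha} N_{\alpha/S}\bigl(\phi|_\alpha\bigr)^{n_\alpha}=\sigma_{f_*Z}(W-W'),
\]
where $\beta$ runs over the components of $f^*Y.Z$ with multiplicities $m_\beta$ and $\alpha$ over those of $Y.f_*Z$ with multiplicities $n_\alpha$. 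The middle equality combines transitivity of the norm with the power formula $N_{\beta/\alpha}(f^*\phi|_\beta)=(\phi|_\alpha)^{\deg(\beta/\alpha)}$, while the resulting balance of exponents $\sum_{\beta\mapsto\alpha}m_\beta\deg(\beta/\alpha)=n_\alpha$ is precisely the projection formula $f_*(f^*Y.Z)=Y.f_*Z$ at the level of cycles. Relations of the second type are handled symmetrically, now using $f_*\,\mathrm{div}_{Y'}(g)=\mathrm{div}_{f(Y')}(\mathrm{Nm}(g))$ to transport the rational equivalence from $\xx$ down to $\yy$ before applying the same norm bookkeeping. As throughout, the independence of all the choices made and the gluing over $S$ are subsumed in the well-definedness of Bloch's torsor $\mathbb{E}$, so no additional argument is needed on those points.
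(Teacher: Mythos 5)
Your proposal is correct in substance, but it takes a different route from the paper: the paper's proof of Proposition \ref{basic} is purely by citation --- symmetry is attributed to Proposition~4 of \cite{bl}, bi-additivity to the biextension property of $\mathbb{E}$ verified in Proposition~7 of \cite{bl}, base change is declared clear, and the projection formula is quoted from Satz~7.10 of \cite{sei} --- whereas you carry out the verification directly on local generating symbols. Your reductions are the right ones: the swap $\langle Z,W\rangle\mapsto\langle W,Z\rangle$ interchanges the two types of defining relation and is consistent by the cocycle identity; bi-additivity comes down to $Y.(Z+Z')=Y.Z+Y.Z'$ and multiplicativity of the norm over a sum of supports; and the projection formula reduces, via transitivity of norms and $N(h^*u)=u^{\deg}$, to the cycle-level identity $f_*(f^*Y.Z)=Y.f_*Z$ together with $f_*\mathrm{div}_{Y'}(g)=\mathrm{div}_{f(Y')}(\mathrm{Nm}\,g)$. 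What your approach buys is transparency --- it exhibits exactly which pieces of intersection theory and norm functoriality each assertion rests on, which the paper's one-line citations conceal; what it costs is that the genuinely delicate points are deferred wholesale to ``the well-definedness of Bloch's torsor'': the moving lemmas needed to arrange simultaneous disjointness and proper intersection over $U$, the requirement that the intersection supports be finite flat over $S$, and, in part (d), the treatment of components of $Z$ or $Y$ that $f$ contracts (where $f_*$ kills the cycle and the exponent bookkeeping degenerates). These are precisely the issues that Seibold's Satz~7.10 is invoked to handle, so your sketch should be read as an outline of that result rather than a self-contained replacement for it. Your remark that the symmetry isomorphism might only be canonical up to sign is a fair caveat and does not affect the statement as formulated.
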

\begin{proof} The symmetry derives from \cite{bl}, Proposition~4, and the bi-additivity boils down to the biextension property of $\mathbb{E}$ verified in \cite{bl}, Proposition~7. The compatibility with base change is clear. The projection formula follows from \cite{sei}, Satz~7.10.
\end{proof}
Bloch's pairing is to be considered as an `intersection pairing'. In fact, let $z,w$ be classes in $\CH^p_o(\xx/S)$ and $\CH^q_o(\xx/S)$ respectively, where $p+q=d+1$. Let $Z,W$ be \emph{global} representatives of $z,w$ intersecting properly on $\xx$, and let $Z. W$ be their intersection cycle. Then there is a non-canonical isomorphism of line bundles $\langle z,w \rangle \isom \oo_S(\pi_*(Z. W))$ on $S$. In particular, if $S$ is a smooth projective curve one has $\deg_S \langle z,w \rangle =  \deg(Z . W)$.

Let $S$ again be an arbitrary complex quasiprojective variety, and consider cycle classes $z,w$ as above. Then the line bundle $\langle z,w \rangle$ carries a canonical hermitian metric $\|\cdot \|$. It is determined as follows \cite{mey}, II.4. Let $\langle Z,W \rangle$ be a local generating section of $\langle z,w \rangle$ over the non-empty Zariski open subset $U$ of $S$. In particular the cycles $Z,W$ have disjoint support over $U$. There exists a Green's current $g_W$ for the cycle $W$ on $\xx_U$ such that $ \partial \overline{\partial} g_W + \pi i \delta_W$ is a $(q,q)$-form on $\xx$, and such that $g_W$ vanishes in each fiber of $\xx/S$. Write $\langle Z,W \rangle_{\infty} = - \log \| \langle Z,W \rangle \|$. Then $\langle Z,W \rangle_{\infty}$ is given by the identity
\begin{equation} \label{archheight}
 \langle Z,W \rangle_{\infty} = - \int_\pi \delta_Z \, g_W \, ,  
\end{equation}
i.e. the `archimedean height pairing' of $Z,W$. It can be verified using Stokes' theorem that $\langle Z,W \rangle_{\infty}$ is independent of the choice of $g_W$, and is symmetric in $Z,W$. In fact we have
\begin{prop} \label{basicmetric} Each of the canonical isomorphisms of Proposition \ref{basic} is an isometry, if Bloch's pairing is endowed with the canonical hermitian metric determined by the archimedean height pairing.
\end{prop}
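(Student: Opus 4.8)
The plan is to check each of the four isometry statements directly on local generating sections, using the explicit description \eqref{archheight} of the metric. Fix a non-empty Zariski open $U \subseteq S$ over which representatives $Z, W$ with disjoint support exist, and recall that $-\log\|\langle Z, W\rangle\| = \langle Z, W\rangle_\infty = -\int_\pi \delta_Z\, g_W$, where $g_W$ is a Green's current for $W$ on $\xx_U$ with $\partial\overline{\partial} g_W + \pi i \delta_W$ a form and $g_W$ vanishing along the fibers of $\xx/S$. The point is that all four isomorphisms of Proposition \ref{basic} are described in terms of these generating sections, so the content of the present statement is a compatibility of the corresponding archimedean height expressions.

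For the symmetry isomorphism of Proposition \ref{basic}(a), which sends the generator $\langle Z, W\rangle$ to $\langle W, Z\rangle$, the assertion is exactly that the archimedean height pairing $\langle Z, W\rangle_\infty$ is symmetric in $Z$ and $W$; as observed following \eqref{archheight} this follows from Stokes' theorem, integrating by parts in $-\int_\pi \delta_Z g_W + \int_\pi \delta_W g_Z$ and using that $\partial\overline{\partial} g_Z + \pi i\delta_Z$ and $\partial\overline{\partial} g_W + \pi i\delta_W$ are smooth while $g_Z, g_W$ vanish along the fibers. For the bi-additivity isomorphisms of Proposition \ref{basic}(b), which identify $\langle Z + Z', W\rangle$ with $\langle Z, W\rangle \otimes \langle Z', W\rangle$ and symmetrically in the second argument, the claim reduces to the additivity $\delta_{Z + Z'} = \delta_Z + \delta_{Z'}$ of currents together with linearity of the relative integral, so that $-\int_\pi \delta_{Z+Z'} g_W = -\int_\pi \delta_Z g_W - \int_\pi \delta_{Z'} g_W$. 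For base change, Proposition \ref{basic}(c), let $h \colon T \to S$ be a morphism of complex quasiprojective varieties. Then $h^* \langle z, w\rangle$ is generated over $h^{-1}(U)$ by the pullback of $\langle Z, W\rangle$, and the pullback of $g_W$ under $\xx_T \to \xx_U$ is again a Green's current for the pulled-back cycle vanishing along the fibers of $\xx_T/T$, those fibers being fibers of $\xx/S$; since the relative integral commutes with base change, $\langle Z_T, W_T\rangle_\infty$ is the pullback of $\langle Z, W\rangle_\infty$, which is the required isometry.

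The main work is in the projection formula, Proposition \ref{basic}(d), where $f \colon \xx \to \yy$ is proper and flat over $S$ and the canonical isomorphism identifies $\langle f_* Z, W\rangle$ with $\langle Z, f^* W\rangle$. Using flatness of $f$ one wants $f^* g_W$ to serve as a Green's current for $f^* W$ on $\xx/S$, vanishing along the fibers because $f$ carries fibers of $\xx/S$ to fibers of $\yy/S$; the desired identity $-\int_{\pi_\xx} \delta_Z\, (f^* g_W) = -\int_{\pi_\yy} \delta_{f_* Z}\, g_W$ then follows from the functoriality $\pi_{\xx,*} = \pi_{\yy,*} \circ f_*$ of relative integration combined with the projection formula $f_*(\delta_Z \cdot f^* g_W) = (f_* \delta_Z)\cdot g_W = \delta_{f_* Z}\cdot g_W$ for proper pushforward of currents.

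I expect the genuine obstacle to lie in this last case: one must verify that pulling back and pushing forward these Green's currents preserves the fiberwise vanishing that pins down the metric — or, failing a direct argument, replace $f^* g_W$ by an admissible Green's current for $f^* W$ and use that $\langle Z, f^* W\rangle_\infty$ is independent of that choice — and one must check that the projection formula for currents is available in the modest generality (possibly singular base $S$, cycles rather than subvarieties) needed here. Alternatively, all of these functoriality facts can be deduced from the compatibility of Bloch's metric with the archimedean components of the arithmetic intersection pairing, as worked out in \cite{mey} and \cite{sei}, which would reduce the whole proposition to the corresponding functorialities of Arakelov-theoretic intersection.
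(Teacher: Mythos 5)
The paper gives no proof of this proposition: it is stated bare, immediately after the remark that $\langle Z,W\rangle_\infty$ is symmetric and independent of the choice of $g_W$, with the substance implicitly delegated to \cite{mey} and \cite{sei}. So there is no argument in the text to compare yours against, and your strategy --- checking each isometry on local generating sections via the explicit formula (\ref{archheight}) --- is the natural one and is essentially sound. Two points deserve emphasis. First, in (a) Stokes' theorem alone only gives $\int_\pi\bigl(g_W\,\partial\overline{\partial}g_Z-g_Z\,\partial\overline{\partial}g_W\bigr)=0$, which reduces the symmetry to $\int_\pi \omega_Z\,g_W=\int_\pi\omega_W\,g_Z$ with $\omega_Z=\partial\overline{\partial}g_Z+\pi i\,\delta_Z$, etc.; it is exactly the fiberwise normalization of the Green's currents that kills these remaining terms, so the normalization is already doing work here, not only in (d). Second, in (d) you correctly isolate the genuine difficulty, but your proposed fallback does not quite close it: independence of $\langle Z,f^*W\rangle_\infty$ from the choice of Green's current holds only among \emph{normalized} Green's currents, whereas $f^*g_W$ --- though a legitimate Green's current for $f^*W$ by flatness --- need not be normalized, since for $\eta$ fiberwise harmonic one has $\int_{X_s}f^*g_W\wedge\eta=\int_{Y_s}g_W\wedge f_*\eta$ and $f_*\eta$ need not be harmonic. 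One must therefore either show directly that this pairing vanishes for the relevant $\eta$, or compute the correction term and check it does not affect $\int_{\pi}\delta_Z(\cdot)$; this is the one piece of genuine content, and it is precisely what the cited theses supply. With that caveat, I see no step that would fail.
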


\section{Poincar\'e biextensions} \label{connection_biext}

An alternative approach to Bloch's line bundle valued pairing uses Poincar\'e biextensions on intermediate jacobians. References for the results in this section are \cite{hain} and \cite{ms}. Let $\pi \colon \xx \to S$ again be a smooth projective morphism of relative dimension $d$, with $S$ a complex quasiprojective variety, and consider again two positive integers $p,q$ with $p+q=d+1$. We denote by $\jj^p(\xx/S)$ and $\jj^q(\xx/S)$ the Griffiths intermediate jacobian fibrations associated to the variations of Hodge structure $\mathrm{R}^{2p-1} \pi_* \zz_{\xx}$ and $\mathrm{R}^{2q-1} \pi_* \zz_{\xx}$ on $S$. Let $\check{\jj}^p(\xx/S)$ be the dual torus fibration of $\jj^p(\xx/S)$. Then there exists a canonical isomorphism
\[ \mathrm{pd} \colon \jj^q(\xx/S) \isom \check{\jj}^p(\xx/S) \]
of torus fibrations over $S$, induced by Poincar\'e duality.

Let $\mathcal{B}$ be the canonical Poincar\'e bundle over $\jj^p(\xx/S) \times \check{\jj}^p(\xx/S)$, \cite{hain} 3.2. Here and henceforth, products are fiber products over the base variety $S$. The bundle $\mathcal{B}$ is a holomorphic line bundle on  $\jj^p(\xx/S) \times \check{\jj}^p(\xx/S)$, and comes with a trivialization $e^*\mathcal{B} \isom \oo_S$ along the zero-section $e$ of $\jj^p(\xx/S) \times \check{\jj}^p(\xx/S)$ over $S$. For classes $z,w$ in $\CH^p_o(\xx/S)$ and $\CH^q_o(\xx/S)$, respectively, we denote by $a_p(z),a_q(w)$ their Griffiths Abel-Jacobi images in $\jj^p(\xx/S)$ and $\jj^q(\xx/S)$. The pair $(a_p(z),\mathrm{pd} \, a_q(w))$ can be viewed as a `normal function' section from $S$ into $\jj^p(\xx/S) \times \check{\jj}^p(\xx/S)$. 

The connection with Bloch's pairing is expressed by Proposition \ref{bloch_biext}, which follows from \cite{ms}, Theorem~1.
\begin{prop} \label{bloch_biext} Let $z,w$ be classes in $\CH^p_o(\xx/S)$ and $\CH^q_o(\xx/S)$, respectively. Then there exists a canonical isomorphism of holomorphic line bundles
\[ \langle z, w \rangle \isom (a_p(z),\mathrm{pd} \, a_q(w))^* \mathcal{B} \]
on $S$.
\end{prop}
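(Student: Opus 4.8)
\emph{Overall strategy.} The statement is, in effect, \cite{ms}, Theorem~1 evaluated at the section $(z,w)$, so the plan is to recover that theorem in the form of an isomorphism of biextensions and then specialize. Recall from \cite{bl} that the local symbols $\langle Z,W\rangle$ assemble into a biextension $\mathbb{E}$ of the sheaf $\CH^p_o(\xx/S)\times\CH^q_o(\xx/S)$ by $\gm$, and that $\langle z,w\rangle$ is by definition the fibre of $\mathbb{E}$ over $(z,w)$. On the analytic side, the Poincar\'e bundle $\mathcal{B}$ on $\jj^p(\xx/S)\times\check{\jj}^p(\xx/S)$, trivialized along the zero section, is a biextension of that torus fibration by $\gm$; transporting along $\mathrm{pd}\colon\jj^q(\xx/S)\isom\check{\jj}^p(\xx/S)$ and pulling back along the Abel--Jacobi homomorphism $a_p\times a_q$ of sheaves of abelian groups, one obtains a second biextension of $\CH^p_o(\xx/S)\times\CH^q_o(\xx/S)$ by $\gm$ whose fibre over $(z,w)$ is precisely $(a_p(z),\mathrm{pd}\,a_q(w))^*\mathcal{B}$. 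Both biextensions are bi-additive and compatible with base change (the first by Proposition~\ref{basic}, the second by bi-additivity of $\mathcal{B}$ and functoriality of $a_p$), so it suffices to produce a canonical isomorphism between them, and this comparison may be carried out after any convenient base change and Zariski-localization on $S$.

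\emph{The local comparison.} Over a Zariski open $U\subseteq S$ choose representatives $Z$, $W$ of $z$, $w$ with disjoint support; then $\langle Z,W\rangle$ is a generating section of $\mathbb{E}$ over $U$. Using the current-theoretic (Deligne-cohomological) description of the Abel--Jacobi images $a_p(z)$, $a_q(w)$ together with Hain's construction of $\mathcal{B}$ (\cite{hain}, 3.2) one writes down a corresponding generating section of the pulled-back Poincar\'e biextension over $U$. The content is then the matching of transition data: if $Z',W'$ is a second pair of representatives over $U$ with, say, $W-W'=\mathrm{div}_Y(f)$ for a cycle $Y$ meeting $Z$ properly and with $Y.Z$ finite flat over $S$, then on Bloch's side the two generators differ by $\sigma_Z(W-W')=\mathrm{Nm}_{Y.Z/S}(f)\in\oo_S^*(U)$, and one must check that the two chosen generators on the Hodge side differ by the same unit. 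This is a reciprocity statement, asserting exactly that $a_p\times a_q$ is a morphism of biextensions, i.e. that it intertwines the intersection-with-a-rational-function recipe defining $\mathbb{E}$ with the transition cocycle of the Poincar\'e bundle; it is the substance of \cite{ms}, Theorem~1. One then observes that the resulting isomorphism is holomorphic, being induced by an isomorphism of complex-analytic biextensions, and — by comparing the archimedean height pairing \eqref{archheight} with Hain's biextension metric — that it is compatible with the canonical metrics, so that the metric statement of Proposition~\ref{basicmetric} is respected on the nose.

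\emph{Main obstacle.} The crux is the reciprocity in the second step: identifying Bloch's norm-of-a-rational-function construction with the transition cocycle of the pulled-back Poincar\'e biextension, equivalently showing that the Abel--Jacobi map carries the intersection-theoretic biextension $\mathbb{E}$ to the topological/Hodge-theoretic one. Making this rigorous requires the precise comparison of the algebraic and the analytic incarnations of the Abel--Jacobi map, and the admissibility of the normal functions $a_p(z)$, $a_q(w)$ so that the identification extends across the loci where the chosen representatives degenerate or fail to meet properly; this is exactly the point at which one invokes \cite{ms} rather than reproving it from scratch.
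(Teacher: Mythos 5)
Your proposal is correct and follows essentially the same route as the paper, which simply deduces the statement from \cite{ms}, Theorem~1; you have merely unpacked what that theorem says (the identification of Bloch's intersection-theoretic biextension with the pullback of the Poincar\'e biextension along the Abel--Jacobi maps) and, like the paper, you defer the reciprocity crux to \cite{ms} rather than reproving it. The remarks on metric compatibility are not needed here, as that is the separate content of Proposition~\ref{bloch_biext_metric}.
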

According to \cite{hain}, 3.2 the biextension line bundle $\mathcal{B}$ is endowed with a canonical hermitian metric. This hermitian metric is uniquely characterized by these two properties: (i) its Chern form is translation invariant in each of the fibers of $\jj^p(\xx/S) \times \check{\jj}^p(\xx/S)$ over $S$; (ii) the trivialization $e^*\mathcal{B} \isom \oo_S$ is an isometry, where $\oo_S$ is endowed with the trivial metric.

An important result is then the following.
\begin{prop} \label{bloch_biext_metric} For $\mathcal{B}$ endowed with its canonical hermitian metric, and for Bloch's pairing $\langle z,w \rangle$ endowed with the canonical hermitian metric determined by the archimedean height pairing (\ref{archheight}), the canonical isomorphism from Proposition \ref{bloch_biext} is an isometry.
\end{prop}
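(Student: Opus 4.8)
The plan is to reduce the statement to a pointwise assertion on $S$ and then to recognise both hermitian metrics as one and the same archimedean height pairing. Since a morphism of hermitian line bundles on $S$ is an isometry precisely when it induces an isometry on every fibre, it suffices to fix a point $s\in S$ and to prove that the canonical isomorphism of Proposition \ref{bloch_biext} is norm-preserving over $s$. To make this concrete I would choose local representatives $Z,W$ of $z,w$ with disjoint support over a Zariski neighbourhood of $s$; then $\langle Z,W\rangle$ is a local generator of $\langle z,w\rangle$, and by Proposition \ref{bloch_biext}, i.e. by \cite{ms}, Theorem~1, it goes over to an explicit nowhere-zero section $\sigma_{Z,W}$ of $\mathcal{B}$ in a neighbourhood of $\bigl(a_p(z)(s),\mathrm{pd}\,a_q(w)(s)\bigr)$. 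The proposition then amounts to the equality $\|\langle Z,W\rangle\|(s)=\|\sigma_{Z,W}\|(s)$.

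Next I would evaluate both norms and show that each equals $\exp$ of minus the archimedean height pairing of $Z|_{\xx_s}$ and $W|_{\xx_s}$ inside the single smooth projective variety $\xx_s$. For the left-hand side this is immediate from (\ref{archheight}): by construction $-\log\|\langle Z,W\rangle\|(s)=\bigl(-\int_\pi\delta_Z\,g_W\bigr)(s)$, where $g_W$ is a Green's current for $W$ whose restriction to the fibres of $\xx/S$ is harmonically normalised (``vanishes in each fibre''), so that the right-hand side is exactly $\langle Z|_{\xx_s},W|_{\xx_s}\rangle_\infty$. For the right-hand side I would use the explicit description of the canonical metric on $\mathcal{B}$ from \cite{hain}, 3.2: on the fibre $\jj^p(\xx_s)\times\check{\jj}^p(\xx_s)$ it is the metric on the Poincar\'e bundle with translation-invariant Chern form which restricts to the trivial metric under the rigidification along the zero-section, equivalently the metric induced by the flat hermitian metrics on the topologically trivial holomorphic line bundles parametrised by $\check{\jj}^p(\xx_s)$, normalised by the rigidification. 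Hain's formulae express $-\log\|\sigma_{Z,W}\|(s)$ through Green's currents on $\xx_s$ and identify it with the same pairing $\langle Z|_{\xx_s},W|_{\xx_s}\rangle_\infty$; here one also uses that $\mathrm{pd}$ is induced by Poincar\'e duality, hence respects the polarisations of the underlying variations of Hodge structure and the hermitian metrics these determine on the intermediate jacobians and their duals. Comparing the two computations gives $\|\langle Z,W\rangle\|(s)=\|\sigma_{Z,W}\|(s)$, as wanted.

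The main obstacle, and the only place where real care is needed, is the precise matching of normalisations between the two descriptions of the archimedean height pairing. One must reconcile the fibrewise-harmonic normalisation of the Green's current $g_W$ in Bloch's construction with the rigidification-plus-flat-metric normalisation of the Poincar\'e bundle, and keep track of the sign convention $\langle Z,W\rangle_\infty=-\log\|\langle Z,W\rangle\|$ together with the normalising constants (the factor $\pi i$ and the conventions in $\partial\overline{\partial}$) entering (\ref{archheight}) and Hain's formulae. This is precisely the comparison, established in principle in \cite{hain} and \cite{ms}, between the biextension height and the Arakelov--Gillet--Soul\'e archimedean height pairing, now to be carried out in the relative setting. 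One may instead phrase matters in terms of curvature, expecting the two candidate metrics on $\langle z,w\rangle$ to have the same Chern form (the pullback along $(a_p(z),\mathrm{pd}\,a_q(w))$ of the canonical translation-invariant form of $\mathcal{B}$), so that their ratio has pluriharmonic logarithm; but pinning this ratio down comes back to the same pointwise identity and offers no genuine shortcut.
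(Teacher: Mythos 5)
The paper offers no argument of its own here: it simply refers to \cite{mey}, Section~II.4. Your outline --- reduce to a pointwise norm comparison at each $s\in S$, compute $-\log\|\langle Z,W\rangle\|(s)$ from (\ref{archheight}) as the archimedean height pairing of $Z|_{\xx_s}$ and $W|_{\xx_s}$, and then show that Hain's canonical metric assigns the same value to the image section of $\mathcal{B}$ --- is exactly the strategy that the cited reference carries out, so your plan is the right one. The first half of your computation (the left-hand side) is indeed immediate from the definition of the metric on Bloch's pairing.

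The difficulty is that the second half, which you defer to ``Hain's formulae,'' \emph{is} the proposition. The statement that the norm of $\sigma_{Z,W}$ at $\bigl(a_p(z)(s),\mathrm{pd}\,a_q(w)(s)\bigr)$ in the canonical biextension metric equals $\exp\bigl(\int_{\xx_s}\delta_Z\,g_W\bigr)$ is precisely Hain's archimedean height theorem, and proving it requires unwinding what the point of $\mathcal{B}$ over $(a_p(z),\mathrm{pd}\,a_q(w))$ actually is: a torsor of retractions/splittings of the mixed Hodge structure carried by the cohomology of $\xx_s\smallsetminus|W|$ relative to $|Z|$, whose ``height'' one then computes against the normalized Green's current. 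As written, your proof asserts the conclusion at this step rather than establishing it, and the normalization issues you correctly flag (the fibrewise-harmonic normalization of $g_W$ versus the rigidification of $\mathcal{B}$ along the zero section, and the $\pi i$ convention in $\partial\overline{\partial}$) are exactly where that computation could fail by a constant factor if done carelessly. A smaller point: your alternative description of the canonical metric via ``flat hermitian metrics on the topologically trivial line bundles parametrised by $\check{\jj}^p$'' does not make sense for higher intermediate jacobians, which are in general not abelian varieties and whose dual points do not parametrize line bundles on $\xx_s$; the only robust characterization is the one the paper uses, namely translation-invariant Chern form on the fibres together with an isometric trivialization along the zero section. Your closing remark is correct that the curvature comparison alone cannot pin down the metric, since it leaves a pluriharmonic ambiguity; this is why the pointwise identity, or at least its verification at one point of each connected component of $S$ after the curvature comparison, cannot be avoided.
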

For a proof we refer to \cite{mey}, Section II.4.

A key example is the case where each of $p,q$ and $d$ is equal to one, i.e. where $\xx/S$ is a smooth projective morphism of relative dimension one, and $z,w$ are flat cycle classes on $\xx$ of relative dimension and degree zero. In this case $\jj^p(\xx/S)$ is the jacobian fibration $\jj(\xx/S)$ of $\xx/S$, and we have a canonical principal polarization $i \colon \jj^p(\xx/S) \isom \check{\jj}^p(\xx/S)$ as in \cite{metrperm}, D\'efinition 2.6.4. It can be derived from \cite{metrperm}, 2.7.9 that we have $\mathrm{pd}=i \circ [-1]$.

Assume there exists a relatively ample line bundle $\theta$ on $\jj(\xx/S)$ over $S$ such that $\pi_*\theta$ is a line bundle on $S$. Identify $\jj(\xx/S)$ with its dual using $i$. Then the Poincar\'e bundle on $\jj(\xx/S) \times \jj(\xx/S)$ can be explicitly written as \[ \mathcal{B} = m^*\theta  \otimes p_1^* \theta^{\otimes -1} \otimes p_2^* \theta^{\otimes -1} \otimes e^* \theta \, , \] where $m \colon \jj(\xx/S) \times \jj(\xx/S) \to \jj(\xx/S)$ is the addition map and $p_1,p_2 \colon \jj(\xx/S) \times \jj(\xx/S) \to \jj(\xx/S)$ are the projections on the first and second coordinate, respectively. The hermitian line bundle $\langle z,w \rangle$ on $S$ coincides with Deligne's pairing \cite{de} of $z$ and $w$ and we have canonical isometries
\[ \langle z,w \rangle \isom (a_1(z),-a_1(w))^*\mathcal{B} \isom (a_1(z),a_1(w))^*\mathcal{B}^{\otimes -1} \, .  \]
If $S$ is a smooth projective curve, the degree of the line bundle $(a_1(z),a_1(w))^* \mathcal{B}$ on $S$ coincides with the N\'eron-Tate height pairing between the divisor classes $z,w$ on the generic fiber of $\xx/S$. 

\section{Variations of polarized Hodge structure over $\aa_g$} \label{variation}

The constructions related to biextensions from Section \ref{connection_biext} can be carried out in a somewhat more general setting, namely general variations of (polarized) Hodge structure. We consider the case where the base space is the moduli space $\aa_g$ of complex principally polarized abelian varieties of dimension $g$. We view $\aa_g$ as an orbifold, and assume that $g \geq 2$ throughout. The basic references for this section are \cite{hain} and \cite{hrar}.

Our starting point is variations of polarized Hodge structure over $\aa_g$  given by a polarized integral Hodge structure $(V_\zz, Q : \wedge^2 V_\zz \to \zz(-n))$ of odd weight $n=-2p+1$. The corresponding variation of polarized Hodge structure over $\aa_g$ is denoted by $(\vv_\zz,\QQ)$.

We recall \cite{hrar} that $(V_\zz,Q)$ determines an intermediate jacobian fibration over $\aa_g$, which we denote by $\jj(V_\zz)$. This is a torus fibration over $\aa_g$, with fiber given as follows: let $V_{A}$ be the fiber of the local system $\vv_\zz$ at the point $[A]$ of $\aa_g$. Then the fiber of $\jj(V_\zz)$ at $[A]$ is the
complex torus
\[ J(V_A)=(V_A \otimes \CC)/(F^{-p+1} (V_A \otimes \CC) +
\mathrm{Im} \, V_A) \, . \]
This torus can be canonically identified with the extension group $\mathrm{Ext}^1(\zz,V_A)$ in the category of mixed Hodge structures.

The holomorphic tangent bundle of $J(V_A)$ comes equipped with a canonical hermitian inner product derived from~$Q$. This hermitian inner product determines a translation-invariant differential $(1,1)$-form on $J(V_A)$. It extends over $\jj(V_\zz)$ in a canonical way.
\begin{propdef} \label{unique2form}
There exists a unique $(1,1)$-form $w_V$ on the torus fibration $\jj(V_\zz)$ such that (i) its restriction to each fiber over $\aa_g$ is the translation-invariant $(1,1)$-form associated to $Q$, and (ii) its pullback along the zero-section vanishes.
\end{propdef}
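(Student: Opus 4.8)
The plan is to make the fibration $\jj(V_\zz)$ explicit over a cover, to construct $w_V$ from the canonical metric on a Poincar\'e-type line bundle so that conditions (i) and (ii) are automatic, and then to prove uniqueness by a rigidity argument over the period domain.

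\emph{Set-up.} Since the weight $n=-2p+1$ is odd, the Hodge--Riemann bilinear relations provide an $\mathrm{Sp}_{2g}(\zz)$-equivariant $C^\infty$ splitting $\vv_\CC=\ff\oplus\overline{\ff}$ with $\ff=F^{-p+1}\vv_\CC$ isotropic for $\QQ$, together with a perfect pairing $\QQ\colon\overline{\ff}\times\ff\to\oo_{\aa_g}$. Under $\overline{\ff}\cong\vv_\CC/\ff=:\mathcal{E}$ this equips the holomorphic vector bundle $\mathcal{E}$ with a (in general indefinite) hermitian form $h$, whose associated translation-invariant $(1,1)$-form on the fibre $J(V_A)=\mathcal{E}_A/\vv_{\zz,A}$ is the form occurring in (i). Pulling back to the Siegel upper half space $\mathfrak{H}_g$ and choosing a holomorphic frame of $\mathcal{E}$ (possible since $\mathfrak{H}_g$ is contractible), the images in $\mathcal{E}$ of the flat sections of $\vv_\zz$ vary holomorphically, so $\jj(V_\zz)$ is, modulo $\mathrm{Sp}_{2g}(\zz)$, the family of complex tori $\CC^r/\Omega(\tau)\zz^{2r}$ with holomorphic period matrix $\Omega$.

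\emph{Existence.} The polarization identifies $\jj(V_\zz)$ with its own dual, and by \cite{hain}, 3.2, the associated Poincar\'e line bundle $\mathcal{B}$ on $\jj(V_\zz)\times_{\aa_g}\jj(V_\zz)$ carries a canonical hermitian metric, characterised by having translation-invariant Chern form along the fibres and isometric trivialisation along the zero-section. I would put $w_V$ equal to the appropriate rational multiple of the pullback of $c_1(\mathcal{B},\|\cdot\|)$ along the anti-diagonal $\delta\colon\jj(V_\zz)\to\jj(V_\zz)\times_{\aa_g}\jj(V_\zz)$, $x\mapsto(x,-x)$. On each fibre $\delta$ is the graph of a group homomorphism, so the pullback of the translation-invariant Poincar\'e Chern form is translation-invariant and, up to a universal scalar, equals the form associated to $Q$; this fixes the multiple and gives (i). For (ii), $\delta\circ e$ is the zero-section of $\jj(V_\zz)\times_{\aa_g}\jj(V_\zz)$, along which $\mathcal{B}$ is isometrically trivial, whence $e^*w_V=0$. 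One could alternatively exhibit $w_V$ via an explicit local potential; but one must use the potential whose $\vv_\zz$-translates differ by the logarithm of a holomorphic factor of automorphy, written down in terms of $\Omega(\tau)$ --- the naive choice (the square of the $h$-norm on $\mathcal{E}$) does \emph{not} work, because the projection $\vv_\CC\to\ff$ is not holomorphic and its $\partial\bar\partial$ fails to be $\vv_\zz$-invariant.

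\emph{Uniqueness.} Let $\eta$ be the difference of two forms with properties (i) and (ii). Then $\eta$ restricts to $0$ on every fibre, satisfies $e^*\eta=0$, and --- like the forms being compared --- is translation-invariant along the fibres. Pulled back to $\mathfrak{H}_g\times\CC^r$ in the coordinates above, such an $\eta$ has vanishing $dw\wedge d\bar w$-component and coefficients depending on $\tau$ alone, hence equals $\sum_{j,l} B_{jl}(\tau)\,dw_j\wedge d\bar\tau_l$ plus its conjugate plus $\sum_{j,l} D_{jl}(\tau)\,d\tau_j\wedge d\bar\tau_l$, and here $D\equiv 0$ by $e^*\eta=0$. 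The crux is that $\eta$, living on the orbifold $\aa_g$, is $\mathrm{Sp}_{2g}(\zz)$-invariant: pulling the displayed form back by $\gamma\in\mathrm{Sp}_{2g}(\zz)$ produces new $d\tau\wedge d\bar\tau$-terms proportional to $\partial_\tau M_\gamma(\tau)$, the derivative of the holomorphic transition matrix of $\mathcal{E}$, and since Griffiths transversality makes $\mathcal{E}$ genuinely non-flat over $\aa_g$ for $g\ge 2$, comparing with the vanishing $d\tau\wedge d\bar\tau$-part forces $B\equiv 0$, so $\eta=0$. I expect this last rigidity step --- the exclusion of the ``mixed'' $dw\wedge d\bar\tau$-component of $\eta$ --- to be the main obstacle: it is exactly here that the non-triviality of the variation of Hodge structure over $\aa_g$, and hence the standing hypothesis $g\ge 2$, is indispensable, and one has to check with some care that no nonzero $\mathrm{Sp}_{2g}(\zz)$-invariant mixed form has the prescribed behaviour on the fibres and along the zero-section.
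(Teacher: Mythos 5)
The paper offers no argument for this statement beyond the citation to Hain--Reed, so your proof stands or falls on its own. The existence half is essentially fine: pulling back the Chern form of the canonically metrized Poincar\'e bundle of \cite{hain}, 3.2 along $x\mapsto(x,-i_Q(x))$ and rescaling does produce a form satisfying (i) and (ii), granted the standard fibrewise identity that $(\mathrm{id},i_Q)^*c_1(\mathcal{B})$ is twice the translation-invariant form attached to $Q$ (which you assert but do not check).

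The uniqueness half has a genuine gap, located exactly at the clause ``--- like the forms being compared --- is translation-invariant along the fibres,'' from which you conclude that all coefficients of $\eta$ depend on $\tau$ alone. Conditions (i) and (ii) give no such thing: (i) only constrains the \emph{pullback} of the form to each fibre, which annihilates every component involving $d\tau$ or $d\bar\tau$, and (ii) only forces the $d\tau\wedge d\bar\tau$-part to vanish along $w=0$, not identically (so even your step ``$D\equiv 0$ by $e^*\eta=0$'' already uses the unproved fibrewise constancy). In fact the statement you are proving is false without a further hypothesis: if $f$ is any smooth function on the total space vanishing on the zero-section and $\alpha$ is any $(1,1)$-form pulled back from $\aa_g$, then $\eta=f\,\alpha$ restricts to $0$ on every fibre and satisfies $e^*\eta=0$, so $w_V+\eta$ satisfies (i) and (ii) whenever $w_V$ does. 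The missing ingredient is closedness of $w_V$: for a closed $\eta$ with vanishing fibrewise restriction, the equations coming from $d\eta=0$ (e.g.\ $\bar\partial_w B_{jl}=0$) together with compactness of the fibres force the remaining coefficients to be constant along the fibres, which is precisely what you assumed. Even after that, your disposal of the mixed $dw_j\wedge d\bar\tau_l$-terms is not the right mechanism: the constraint that kills $B$ is descent to the torus bundle, i.e.\ invariance under translation by the flat lattice sections $w\mapsto w+\lambda(\tau)$, which sends $dw_j$ to $dw_j+\partial_\tau\lambda_j\,d\tau$ and so creates $d\tau\wedge d\bar\tau$-terms $B_{jl}\,\partial_{\tau_m}\lambda_j$ that must vanish for all $\lambda$; one then needs the derivatives $\partial_\tau\lambda$ of the periods to span enough directions. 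Your appeal to $\mathrm{Sp}_{2g}(\zz)$-invariance and ``non-flatness of $\mathcal{E}$'' gestures at this but neither states the correct invariance nor verifies the spanning condition, and you yourself flag it as unchecked. As it stands, uniqueness is not proved.
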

\begin{proof} See \cite{hrar}, Section~5.
\end{proof}
We suppose from now on that $V_\zz$ has weight~$-1$. Let $\check{\jj}(V_\zz)$ be the complex torus fibration dual to $\jj(V_\zz)$. Let $\mathcal{B}$ be the canonical Poincar\'e biextension line bundle on $\jj(V_\zz) \times \check{\jj}(V_\zz)$, \cite{hain} 3.2. As above, this is a holomorphic line bundle on $\jj(V_\zz) \times \check{\jj}(V_\zz)$, equipped with a trivialization along the zero section, and a canonical hermitian metric.

Note that the given polarization $Q$ of $V_\zz$ induces an isogeny $i_Q \colon \jj(V_\zz) \to \check{\jj}(V_\zz)$ of torus fibrations. We define $\bb$ to be the pullback, along $(\mathrm{id},i_Q)$, of the line bundle $\mathcal{B}$ to $\jj(V_\zz)$. The bundle $\bb$ comes endowed with a trivialization along the zero section and canonical hermitian metric, both induced from $\mathcal{B}$ by pulling back. By abuse of language we refer to the hermitian line bundle $\bb$ as the biextension line bundle over $\jj(V_\zz)$.
\begin{prop} \label{biextension} Let $\vv_\zz$ be a variation of polarized
Hodge structure of weight~$-1$ over $\aa_g$ and let $\bb$ be the associated biextension line bundle over $\jj(V_\zz)$. Then the canonical hermitian metric on $\bb$ satisfies the following two properties: (i) its Chern form equals $2\,w_V$, and (ii) the pullback of $\bb$ along the zero section is, via its given trivialization, equal to the trivial hermitian line bundle. The bundle $\bb$ together with its canonical metric is uniquely characterized, as a holomorphic hermitian line bundle on $\jj(V_\zz)$, by these two properties.
\end{prop}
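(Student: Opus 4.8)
The plan is to extract all three assertions from the construction of $\bb$ as the pullback $(\mathrm{id},i_Q)^*\mathcal{B}$, combined with the two defining properties of the canonical metric on the Poincar\'e biextension $\mathcal{B}$ and the uniqueness clause of Proposition-Definition \ref{unique2form}. Property (ii) is essentially built in: since $i_Q\colon\jj(V_\zz)\to\check{\jj}(V_\zz)$ is a homomorphism of torus fibrations it carries the zero section to the zero section, so the composite of the zero section $z\colon\aa_g\to\jj(V_\zz)$ with $(\mathrm{id},i_Q)$ is exactly the zero section $e$ of $\jj(V_\zz)\times\check{\jj}(V_\zz)$. Hence $z^*\bb=e^*\mathcal{B}$ as hermitian line bundles, and the given trivialization of the right-hand side is an isometry onto the trivial hermitian line bundle by the normalization of the metric on $\mathcal{B}$; this is (ii), and in particular the pullback along $z$ of the Chern form of $\bb$ vanishes.

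For (i), write $\omega$ for the Chern form of $\bb$, so that $\omega=(\mathrm{id},i_Q)^*(\text{Chern form of }\mathcal{B})$. By the first characterizing property of the metric on $\mathcal{B}$, on a fibre $J(V_A)\times\check{J}(V_A)$ the Chern form of $\mathcal{B}$ is the translation-invariant representative of the Poincar\'e class $c_1(\mathcal{P}_A)$. Over $[A]$ the morphism $(\mathrm{id},i_Q)$ restricts to $(\mathrm{id},i_{Q,A})=(\mathrm{id}\times i_{Q,A})\circ\Delta\colon J(V_A)\to J(V_A)\times\check{J}(V_A)$, where $i_{Q,A}$ is the polarization homomorphism attached to the class $\eta\in\mathrm{H}^2(J(V_A),\zz)$ of the translation-invariant $(1,1)$-form associated to $Q$. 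For such a polarization one has $(\mathrm{id}\times i_{Q,A})^*c_1(\mathcal{P}_A)=m^*\eta-p_1^*\eta-p_2^*\eta$ on $J(V_A)\times J(V_A)$; pulling back along $\Delta$ and using $m\circ\Delta=[2]$, $p_i\circ\Delta=\mathrm{id}$, and $[2]^*\eta=4\eta$ on $\mathrm{H}^2$, we obtain $c_1(\bb|_{J(V_A)})=4\eta-2\eta=2\eta$. As $\omega|_{J(V_A)}$ is translation invariant (being the pullback of a translation-invariant form along a homomorphism of tori), it is twice the translation-invariant $(1,1)$-form associated to $Q$. Thus $\tfrac12\omega$ satisfies property (i) of Proposition-Definition \ref{unique2form} fibrewise, and by the previous paragraph it satisfies property (ii) there; the uniqueness in Proposition-Definition \ref{unique2form} then forces $\tfrac12\omega=w_V$, i.e. $\omega=2\,w_V$.

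For the uniqueness clause, let $(\mathcal{M},h)$ be a holomorphic hermitian line bundle on $\jj(V_\zz)$ with Chern form $2\,w_V$ and trivial pullback along $z$. Then $\mathcal{N}:=\mathcal{M}\otimes\bb^{\otimes-1}$ with the induced metric has vanishing Chern form and trivial pullback along $z$; a holomorphic hermitian line bundle with zero Chern form is flat unitary, hence corresponds to a unitary character of $\pi_1(\jj(V_\zz))$. Since $\aa_g$ is an orbifold $K(\pi,1)$ and the zero section splits the homotopy exact sequence of the torus fibration $\jj(V_\zz)\to\aa_g$, one has $\pi_1(\jj(V_\zz))\cong V_\zz\rtimes\mathrm{Sp}_{2g}(\zz)$, so a character trivial on the image of $\pi_1(\aa_g)$ factors through the coinvariants $(V_\zz)_{\mathrm{Sp}_{2g}(\zz)}$; as these vanish for the variations considered here, $\mathcal{N}$ is trivial and $(\mathcal{M},h)\cong\bb$ as hermitian line bundles.

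The main obstacle is the factor $2$ in (i): it is genuinely forced by the appearance of $i_Q$, rather than a section through the origin, in the definition of $\bb$, and it enters precisely through the relation $[2]^*\eta=4\eta$ on $\mathrm{H}^2$. The one further point requiring care is the upgrade from a fibrewise identity of forms to an identity on the whole total space of $\jj(V_\zz)$, which is exactly the job of Proposition-Definition \ref{unique2form}; with that in hand, the uniqueness clause reduces to the routine computation of $\pi_1$ of an intermediate jacobian fibration sketched above.
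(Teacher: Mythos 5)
Your argument is correct in substance, but it is a genuinely different route from the paper's: the paper gives no proof of its own here, simply citing Hain--Reed (\cite{hrar}, Proposition~7.3 for properties (i) and (ii), and Proposition~6.1 for the uniqueness), whereas you reconstruct the content of those citations directly from the definition $\bb=(\mathrm{id},i_Q)^*\mathcal{B}$. Your computation of the factor $2$ via $(\mathrm{id}\times i_{Q,A})^*c_1(\mathcal{P}_A)=m^*\eta-p_1^*\eta-p_2^*\eta$ and $\Delta^*$, $[2]^*\eta=4\eta$ is the standard Mumford-bundle argument and is exactly where the $2$ in $2\,w_V$ comes from; feeding the resulting fibrewise identity into the uniqueness clause of Proposition-Definition \ref{unique2form} to globalize is also the right mechanism. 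Two points deserve flagging. First, you silently identify the cohomology class $\eta$ of the polarization isogeny $i_{Q,A}$ with the class of the translation-invariant $(1,1)$-form that Proposition-Definition \ref{unique2form} attaches to $Q$; this is a normalization convention inherited from \cite{hrar} and should be stated as such. Second, your uniqueness argument needs the vanishing of $\mathrm{Sp}_{2g}(\zz)$-invariant unitary characters of $V_\zz$ (equivalently, triviality of the coinvariants $(V_\zz)_{\mathrm{Sp}_{2g}(\zz)}$, including torsion), which you assert only ``for the variations considered here''; as the proposition is stated for an arbitrary weight $-1$ variation over $\aa_g$, this hypothesis is genuinely needed and is precisely the condition under which \cite{hrar}, Proposition~6.1 operates. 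For $H$ one kills the coinvariants with transvections, and a similar (but not entirely automatic) computation is needed for $\wedge^3 H$ and $\wedge^3H/H$, where a priori only $2$-torsion survives because $-\mathrm{id}$ acts by $-1$. With these two caveats made explicit, your proof stands as a self-contained replacement for the citations.
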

\begin{proof}  We refer to \cite{hrar}, Proposition~7.3 for the proof that properties (i) and (ii) hold for the canonical metric on $\bb$. The uniqueness of $\bb$ as a holomorphic hermitian line bundle on $\jj(V_\zz)$ satisfying (i) and (ii) is stated in \cite{hrar}, Proposition~6.1.
\end{proof}
We note that the constructions of Section \ref{connection_biext} fit in the present discussion, if one starts out with the universal abelian scheme $\pi \colon \xx_g \to \aa_g$ and takes the local systems $\vv_{\zz,2p-1}= \mathrm{R}^{2p-1} \pi_* \zz_{\xx_g}$ over $\aa_g$ for positive integers $p$. Then $\jj^p(\xx_g/\aa_g)=\jj(V_{\zz,2p-1})$ for each positive integer $p$ and for positive integers $p,q$ with $p+q=g+1$ one has a canonical isomorphism $\mathrm{pd} \colon \jj(V_{\zz,2q-1}) \isom \check{\jj}(V_{\zz,2p-1}) $ of torus fibrations over $\aa_g$ induced by Poincar\'e duality.

A fundamental case is the case where $(V_\zz,Q)$ equals $(H,Q_H)$, where $H=H_1(X,\zz)$ is the first homology group of a compact Riemann surface of genus $g \geq 2$, and $Q_H$ is its standard intersection form. By standard Hodge theory of complex tori, one has canonical isomorphisms $\vv_{\zz,k} \isom \wedge^k \vv_\zz$ and the polarizations on the $\vv_{\zz,k}$ are compatible with these isomorphisms.

An important role in our discussion is played by the case $k=3$. Note that the Hodge structure $H$ is mapped into $\wedge^3H$ by sending $x$ to $x \wedge \zeta $, where $\zeta$ in $\wedge^2 H$ is the dual of $Q_H$. The polarization $Q_{\wedge^3H}$ on the Hodge structure $\wedge^3 H$  sends $ (x_1\wedge x_2 \wedge x_3, y_1 \wedge y_2 \wedge y_3 )$ to $ \det(x_i,y_j)$.

We also consider the Hodge structure $\wedge^3H/H$. Its polarization is given as follows. First of all one has a contraction map $c : \wedge^3 H \to H$, defined by
\[ x \wedge y \wedge z \mapsto (x,y)z + (y,z)x + (z,x)y \, .
\]
The composite $H \to \wedge^3 H \to H$ induced by $c$ and $\wedge \zeta$ equals $(g-1)$ times the identity. Denote the projection $\wedge^3 H \to \wedge^3H/H$ by $p$.
After tensoring with $\qq$, the projection $p$ allows a splitting $j$, defined by
\[ p(x \wedge y \wedge z) \mapsto x \wedge y \wedge z - \zeta \wedge c(x\wedge y \wedge z)/(g-1) \, . \]
With these definitions, the form $Q_{\wedge^3H/H}$ on $\wedge^3H/H$ is given by the formula
\[ (u,v) \mapsto (g-1) Q_{\wedge^3 H}(j(u),j(v)) \, . \]
We denote by $w_H$, $w_{\wedge^3 H}$ and $w_{\wedge^3H/H}$ the $(1,1)$-forms on the intermediate jacobian fibrations $\jj(H)$, $\jj(\wedge^3 H)$ and $\jj(\wedge^3H/H)$ over
$\aa_g$ given by Proposition~\ref{unique2form}. The various morphisms of polarized Hodge structure described above canonically give rise to morphisms
\[ \xymatrix{ \jj(H) \ar@/^/[r]^{\wedge \zeta} & \jj(\wedge^3H) \ar[l]^c \ar[r]^p & \jj(\wedge^3H/H)
} \]
of intermediate jacobians over $\aa_g$.

The key to the proof of Theorem \ref{thmA} is the following
\begin{prop} \label{equality2biextensions}
On $\jj(\wedge^3 H)$, one has a canonical isometry
\[ \bb_{\wedge^3 H}^{\otimes g-1} \isom p^* \bb_{\wedge^3 H/H} \otimes c^* \bb_H  \]
of hermitian line bundles.
\end{prop}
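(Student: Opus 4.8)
The plan is to recognize both line bundles in the statement as the biextension line bundle attached to one and the same polarized variation of Hodge structure over $\aa_g$, and then to invoke the uniqueness built into Proposition~\ref{biextension}. After a Tate twist --- which affects neither the intermediate jacobian fibrations, nor the forms $w_V$, nor the biextension bundles $\bb_V$ --- we may assume that all Hodge structures in sight have weight $-1$, so that Section~\ref{variation} applies directly and the maps $\wedge\zeta$, $c$, $p$ and the $\qq$-rational splitting $j$ become morphisms of polarized Hodge structures up to a scalar, inducing homomorphisms of torus fibrations over $\aa_g$ carrying zero-sections to zero-sections. Let $V'$ be the polarized variation with underlying Hodge structure $\wedge^3H$ and polarization $(g-1)\,Q_{\wedge^3H}$. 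Replacing $Q_{\wedge^3H}$ by $(g-1)\,Q_{\wedge^3H}$ replaces the polarization isogeny $i_Q$ by $(g-1)\,i_Q$, and since the Poincar\'e biextension $\mathcal{B}$ is linear in the second variable this gives $\bb_{V'}=\bb_{\wedge^3H}^{\otimes g-1}$; likewise $w_{V'}=(g-1)\,w_{\wedge^3H}$, the hermitian inner product on the tangent spaces of the fibers depending linearly on the polarization.

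By the uniqueness statement in Proposition~\ref{biextension}, applied to $V'$, it then suffices to show that $p^*\bb_{\wedge^3H/H}\otimes c^*\bb_H$ has Chern form $2\,w_{V'}$ and that its pullback along the zero-section of $\jj(\wedge^3H)$, endowed with the tensor product of the given trivializations, is the trivial hermitian line bundle. The second point is immediate: $p$ and $c$ carry zero-sections to zero-sections, and the pullbacks of $\bb_{\wedge^3H/H}$ and of $\bb_H$ along their zero-sections are trivial by property~(ii) of Proposition~\ref{biextension}. By property~(i) of the same proposition, the first point is equivalent to the identity of $(1,1)$-forms
\[ (g-1)\,w_{\wedge^3H} \;=\; p^*w_{\wedge^3H/H} + c^*w_H \]
on $\jj(\wedge^3H)$.

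To prove this form identity I would first note that $p^*w_{\wedge^3H/H}$ and $c^*w_H$ are again fiberwise translation-invariant and vanish along the zero-section, since $p$ and $c$ are fiberwise homomorphisms of complex tori respecting the zero-sections. By the uniqueness in Proposition-Definition~\ref{unique2form} it is then enough to check the identity on each fiber $J(\wedge^3H_A)$ over $[A]\in\aa_g$; and since a translation-invariant $(1,1)$-form on a complex torus is determined by the hermitian form it induces on the tangent space at the origin, the whole proposition reduces to the identity
\[ (g-1)\,h_{\wedge^3H} \;=\; p^*h_{\wedge^3H/H} + c^*h_H \]
of hermitian forms on $T_0 J(\wedge^3H_A)$, where $h_V$ denotes the hermitian form attached to the polarization of $V$.

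For this I would use the Lefschetz-type decomposition $\wedge^3H_A\otimes\qq=\mathrm{im}(j)\oplus\mathrm{im}(\wedge\zeta)$, in which $\mathrm{im}(j)=\ker(c)$ is the primitive part. This is an orthogonal direct sum of sub-Hodge structures of the same weight, compatible with the Hodge filtrations, so it induces an orthogonal decomposition of $T_0 J(\wedge^3H_A)$ for $h_{\wedge^3H}$, and both $p^*h_{\wedge^3H/H}$ and $c^*h_H$ are automatically diagonal for it. On $\mathrm{im}(j)$ one has $p\circ j=\mathrm{id}$, $c\circ j=0$, and $Q_{\wedge^3H}(ju,jv)=\frac{1}{g-1}Q_{\wedge^3H/H}(u,v)$ by the very definition of $Q_{\wedge^3H/H}$; on $\mathrm{im}(\wedge\zeta)$ one has $p\circ(\wedge\zeta)=0$, $c\circ(\wedge\zeta)=(g-1)\,\mathrm{id}_H$, and a classical computation yields $Q_{\wedge^3H}(\zeta\wedge x,\zeta\wedge y)=(g-1)\,Q_H(x,y)$ up to sign. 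Combining these with the additivity of $h_V$ over orthogonal direct sums, its linearity in the polarization, and the fact that multiplication by $n$ scales a translation-invariant $(1,1)$-form by $n^2$, one obtains the hermitian-form identity on each summand, hence on $T_0 J(\wedge^3H_A)$. I expect the main obstacle to be exactly this final bookkeeping of scalars: making the factor $g-1$ emerge correctly on both summands while keeping consistent normalizations for the Tate twists implicit in $\wedge\zeta$ and $c$ being morphisms of Hodge structures only up to twist, and for the hermitian form attached to a Tate-twisted polarization.
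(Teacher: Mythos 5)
Your proof is correct and follows essentially the same route as the paper: both reduce the isometry to the uniqueness characterization in Proposition~\ref{biextension}, checking triviality along the zero section and the identity of Chern forms, which in turn comes down to the polarization identity $(g-1)Q_{\wedge^3 H}=p^*Q_{\wedge^3H/H}+c^*Q_H$. The only difference is that the paper simply cites Hain--Reed (\cite{hrgeom}, Proposition~18) for this identity, whereas you verify it directly via the orthogonal decomposition $\wedge^3H\otimes\qq=\ker(c)\oplus\mathrm{im}(\wedge\zeta)$ --- a correct computation that just makes the cited lemma explicit.
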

\begin{proof} We have an equality $(g-1)Q_{\wedge^3 H}=p^*Q_{\wedge^3H/H}+c^*Q_H$ of polarization forms on the variation of Hodge structure associated to $\wedge^3 H$, by \cite{hrgeom}, Proposition 18. This implies the equality $(g-1)w_{\wedge^3H}=p^*w_{\wedge^3H/H}+c^*w_H$ of associated canonical $(1,1)$-forms on $\jj(\wedge^3H)$. The Chern forms of left and right hand side of the claimed isometry are therefore equal by Proposition \ref{biextension}. Moreover, both left and right hand side restrict to the trivial hermitian line bundle after pulling back along the zero section. By the uniquely defining property from Proposition \ref{biextension} we obtain the existence of the claimed isometry.
\end{proof}
Consider once again the intermediate jacobians $\jj^p(\xx_g/\aa_g)=\jj(V_{\zz,2p-1})$ for positive integers $p \leq g$ associated to the universal abelian scheme $\xx_g \to \aa_g$. Each $\jj^p(\xx_g/\aa_g)$ admits an Abel-Jacobi map $a_p \colon \CH^p_o(\xx_g/\aa_g) \to \jj^p(\xx_g/\aa_g)$. Let $p,q$ be positive integers such that $p+q=g+1$. One then has \cite{dm} a Fourier-Mukai transform $F \colon \CH^p_o(\xx_g/\aa_g) \to \CH^q_o(\xx_g/\aa_g)$ based on the Poincar\'e bundle on $\xx_g \times \xx_g$ where $\xx_g$ is identified with its dual $\check{\xx}_g$ by the tautological principal polarization. A result of Beauville implies that $F$ is compatible with the canonical polarization morphism $i \colon \jj^p(\xx_g/\aa_g) \to \check{\jj}^p(\xx_g/\aa_g)$ under the Abel-Jacobi maps.
\begin{prop} \label{beauville} Let $p,q$ be  such that $p+q=g+1$, let $F \colon \CH^p_o(\xx_g/\aa_g) \to \CH^q_o(\xx_g/\aa_g)$ be the Fourier-Mukai transform and let
$i \colon \jj^p(\xx_g/\aa_g) \to \check{\jj}^p(\xx_g/\aa_g)$ be the polarization morphism derived from the tautological polarization on $\xx_g$. Then $i(a_p(z))=\mathrm{pd} \, a_q(F(z))$ for all $z$ in $\CH^p_o(\xx_g/\aa_g)$.
\end{prop}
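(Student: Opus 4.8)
The plan is to factor $F$ through cohomology and invoke the functoriality of the Abel--Jacobi map under algebraic correspondences. By construction $F$ is the map on $\CH^\bullet_o(\xx_g/\aa_g)$ induced by the relative correspondence $\Gamma = \mathrm{ch}(\mathcal{P})$ on $\xx_g \times_{\aa_g} \check\xx_g$, where $\mathcal{P}$ is the universal Poincar\'e bundle and $\check\xx_g$ is identified with $\xx_g$ via the tautological polarization. The first step is to record that the Griffiths Abel--Jacobi maps intertwine the action of $\Gamma$ on homologically trivial cycles with the morphism $\Gamma_* \colon \jj^p(\xx_g/\aa_g) \to \jj^q(\xx_g/\aa_g)$ of intermediate jacobian fibrations induced by the K\"unneth component of $[\Gamma]$ of bidegree $(2p-1,2q-1)$; this is the standard compatibility of Abel--Jacobi with correspondences, and it carries over to the relative (orbifold) setting over $\aa_g$ because the construction of Sections~\ref{connection_biext} and~\ref{variation} is functorial in the variation of Hodge structure. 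Granting this, it remains to identify $\Gamma_*$ with $\mathrm{pd}^{-1} \circ i$, or equivalently to check the equality $\mathrm{pd} \circ \Gamma_* = i$ of morphisms of variations of Hodge structure over $\aa_g$.

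The second step is the cohomological core, and it is exactly the result of Beauville referred to above. Using the canonical isomorphisms $\mathrm{R}^k \pi_* \qq \cong \wedge^k \mathrm{R}^1 \pi_* \qq$ and the standard description of $c_1(\mathcal{P})$ as the tautological class pairing $\mathrm{R}^1 \pi_* \qq$ with its dual, one computes that the Fourier--Mukai transform on $\mathrm{R}^{2p-1} \pi_* \qq$ is, up to an explicit sign, the Poincar\'e-duality isomorphism onto the dual of $\mathrm{R}^{2q-1} \pi_* \qq$, followed by the canonical identification of the latter with $\mathrm{R}^{2q-1}$ of the dual abelian scheme. On the other hand the polarization form $Q$ on $\mathrm{R}^{2p-1} \pi_* \qq$ (in the normalization of Section~\ref{variation}) factors as the Lefschetz operator (cup product with the appropriate power of the class of the principal polarization) followed by Poincar\'e duality, and $\mathrm{pd}$ is by definition induced by Poincar\'e duality. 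Combining these three descriptions yields $\mathrm{pd} \circ \Gamma_* = i$, hence $a_q(F(z)) = \mathrm{pd}^{-1}\big(i(a_p(z))\big)$ for all $z$, which is the assertion.

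The step I expect to be the genuine obstacle is making the identification in the second step \emph{exact} rather than merely correct up to a nonzero rational constant. The Fourier--Mukai kernel $\mathrm{ch}(\mathcal{P}) = \exp(c_1(\mathcal{P}))$ carries factorials, the Lefschetz power in the definition of $Q$ contributes its own combinatorial constant, and there are Koszul signs both in the identification of the exterior power of a dual with the dual of the exterior power and in the integration-over-the-fibre map; all of these must be tracked so that the two morphisms of Hodge structures agree on the nose. This is precisely the point at which Beauville's normalization is invoked, so in the write-up I would quote his statement in the exact form needed and then only verify that the Lefschetz description of $Q$ and the definition of $\mathrm{pd}$ are consistent with that normalization. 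A minor secondary point is to confirm that the functoriality of Abel--Jacobi under correspondences used in the first step is available for the extended objects over $\aa_g$, rather than over a fixed quasiprojective base; this is routine given the way the normal-function and biextension formalism has been set up.
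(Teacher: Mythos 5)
Your proposal is correct and follows essentially the same route as the paper, which simply cites Beauville's Proposition~2 for the cohomological identification $\mathrm{pd}\circ\Gamma_*=i$; your first step (functoriality of Abel--Jacobi under the Fourier--Mukai correspondence, carried over to the relative setting over $\aa_g$) is exactly the implicit scaffolding needed to apply that citation fibrewise. Your flagging of the normalization and sign issues as the real content of the reduction is apt, and deferring them to Beauville's precise statement is what the paper does as well.
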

\begin{proof} See \cite{be}, Proposition 2.
\end{proof}

\section{Normal functions associated to the Ceresa cycle} \label{normal}

Let $\mm_g$ be the moduli space of complex smooth projective curves of genus $g \geq 2$, and let $\pi \colon \mm_{g,1} \to \mm_g$ be the universal curve. Both are viewed as orbifolds. Let $\jj(H), \jj(\wedge^3 H)$ and $\jj(\wedge^3H/H)$ be the Griffiths intermediate jacobian fibrations over $\aa_g$ determined by the local systems $H, \wedge^3 H$ and $\wedge^3H/H$, respectively, as in Section \ref{variation}. As is explained in the Introduction to \cite{hrgeom} they fit in a commutative diagram
\[ \xymatrix{ & \jj(H)   \\
\mm_{g,1} \ar[dd]_\pi \ar[ur]^\kappa \ar[r]^\mu \ar[dr]^\nu & \jj(\wedge^3 H)
\ar[u]^c \ar[d]^p
\\
 \ar[d] & \jj(\wedge^3H/H) \ar[d] \\
\mm_g \ar[r] & \aa_g }    \]
of orbifolds with $p,c$ the maps from Section \ref{variation}. The `normal functions' $\kappa, \mu$ and $\nu$ are defined as follows. First of all $\kappa$ is the map sending a pair $(X,x)$ where $X$ is a curve and $x$ is a point on $X$ to the class of the degree zero divisor $(2g-2)x - K$ in the jacobian $J$ of $X$. Here $K$ is a canonical divisor on $X$.

The map $\mu$ is B. Harris's `pointed harmonic volume' \cite{harris} sending a pair $(X,x)$ to the Abel-Jacobi image in $J(\wedge^3H)$ of the Ceresa cycle determined by $x$. Recall that the Ceresa cycle is the homologically trivial cycle in the jacobian $J$ given as $X_x-X^-_x$, where $X_x$ is the curve $X$ embedded in $J$ via the Abel-Jacobi map $y \mapsto [y-x]$ and where $X^-_x=[-1]_*X_x$. The image of $\mu(x)$ under $p$ is independent of the choice of $x$ and in fact we have \cite{pu}, Corollary 6.3 the relation
\begin{equation} \label{difference} \mu(x) - \mu(y) = -2([x-y])
\end{equation}
in $J$, sitting canonically inside $J(\wedge^3 H)$ by wedging with $\zeta$.
We find that the `harmonic volume' $\nu = p\mu$ factors over $\mm_g$, and we shall denote the resulting normal function $\mm_g \to \jj(\wedge^3H/H)$ also by $\nu$.

\section{Extension to the Picard variety of degree one over $\mm_g$} \label{extension}

Let $\Pic^d_g \to \mm_g$ be the universal Picard variety in degree~$d$, viewed as an orbifold; for example $\Pic^0_g$ is identified with the pullback of $\jj(H)$ along the Torelli map $\mm_g \to \aa_g$, i.e. the universal jacobian. The aim of this section is to show that the commutative diagram of Section \ref{normal} fits in a larger diagram
\[ \xymatrix{ & \jj(H)   \\
\Pic^1_g\ar[dd]_\pi \ar[ur]^\kappa \ar[r]^\mu \ar[dr]^\nu & \jj(\wedge^3 H)
\ar[u]^c \ar[d]^p
\\
 \ar[d] & \jj(\wedge^3H/H) \ar[d] \\
\mm_g \ar[r] & \aa_g  }   \]
with $\nu$ again factoring over $\mm_g$.
We refer to \cite{pu}, Sections 5 and 6 for the details of the following explanation. Let $X$ be a compact connected Riemann surface of genus $g \geq 2$ and as before let $H=H_1(X,\zz)$ be its first homology group. We then have the following generalization of the pointed harmonic volume of Section \ref{normal}. First of all consider the map:
\[ A \colon J(H) \longrightarrow J(\wedge^3 H) \, , \quad [x-y] \mapsto [X_y - X_x] \, . \]
This map is well-defined, and is in fact an injective homomorphism, coinciding with the embedding of $J(H)$ into $J(\wedge^3H)$ given by wedging with $\zeta$ by Lemma~6.1 of \cite{pu}. By Lemma 6.4 of \cite{pu} we also have a well-defined embedding
\[ S \colon \mathrm{Pic}^2 \, X \longrightarrow J(\wedge^3 H) \, , \quad [x+y+D] \mapsto A(D) - [X_x - X_y^-] \]
for each degree zero divisor $D$ on $X$ and $x,y \in X$. It follows that the map:
\[ \mu \colon \mathrm{Pic}^1 \, X \longrightarrow J(\wedge^3 H) \, , \quad [x+D] \mapsto -A(2D) + [X_x - X_x^-] = -A(2D)+ \mu(x) \]
is well-defined too. We have a canonical injection $X \to \mathrm{Pic}^1 X$ and the map $\mu$ clearly restricts to the pointed harmonic volume on $X$. The image $\mu(e)$ of a point $e$ in $\mathrm{Pic}^1 \, X$ represents the class of the cycle $X_e - X_e^-$ where $X_e$ is the Abel-Jacobi image of $X$ in $J(H)$ using $e$ as a base divisor.
\begin{prop} \label{linear}
(a) The map $\mu \colon \mathrm{Pic}^1\, X \to J(\wedge^3H)$ is the linear extension of the pointed harmonic volume $\mu \colon X \to J(\wedge^3H)$. \\
(b) We have the equality $\mu(e)-\mu(e') = -2(e - e')$ for all $e,e'$ in $\mathrm{Pic}^1 \, X$. In particular, the image of $\mu(e)$ under $p$ is independent of the choice of $e$. \\
(c) The set $\mu(\mathrm{Pic}^1\, X)$ is a coset of $J(H)$ in $J(\wedge^3H)$. \\
Let $K$ be a canonical divisor on $X$. \\
(d) The image of $e$ under $c\mu$ equals the class of $ (2g-2)e - K$ in $J(H)$ for all $e$ in $\mathrm{Pic}^1 \, X$.
\end{prop}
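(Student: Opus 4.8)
The plan is to derive all four assertions from the single explicit formula $\mu([x+D]) = -A(2D) + \mu(x)$ (valid for $x$ a point of $X$ and $D$ a divisor of degree zero on $X$), together with three facts already available: the relation (\ref{difference}), i.e. $\mu(x) - \mu(y) = -2([x-y])$ viewed inside $J(\wedge^3 H)$ via wedging with $\zeta$; the fact that $A \colon J(H) \to J(\wedge^3 H)$ is an injective homomorphism whose image is that of the map $\wedge \zeta$, so that in particular $\mathrm{im}(A) = \ker\bigl(p \colon \jj(\wedge^3 H) \to \jj(\wedge^3 H/H)\bigr)$; and the contraction identity $c \circ A = (g-1)\cdot \mathrm{id}$ on $J(H)$, which is the reflection on intermediate jacobians of the fact recorded in Section \ref{variation} that the composite of $\wedge \zeta$ with $c$ is $(g-1)$ times the identity on $H$.

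For part (a) I would first recall the general principle that any morphism from the curve $X$ to a complex torus $T$ extends uniquely to an affine morphism $\mathrm{Pic}^1 X \to T$ — a translate of a homomorphism of complex tori, relative to the $\mathrm{Pic}^0 X$-torsor structure — and that this extension is by definition the \emph{linear extension}; uniqueness holds because the classes $[x-y]$ with $x,y \in X$ generate $\mathrm{Pic}^0 X$. It then remains to check that the $\mu$ defined on $\mathrm{Pic}^1 X$ by the displayed formula is such an extension. That it restricts to the pointed harmonic volume on $X \hookrightarrow \mathrm{Pic}^1 X$ is immediate by taking $D = 0$; that it is well defined is exactly the content of (\ref{difference}), since $[x+D] = [x'+D']$ forces $[x] - [x'] = [D'] - [D]$ in $\mathrm{Pic}^0 X$; and that it is affine is read off from the formula, because for fixed $x$ the assignment $[D] \mapsto \mu([x+D])$ is the translate by $\mu(x)$ of the homomorphism $-2A$, while every degree-one class is of the form $[x+D]$. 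Hence $\mu$ on $\mathrm{Pic}^1 X$ is the linear extension, with linear part $-2A$.

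Part (b) is the identification of that linear part: writing $e = [x+D]$ and $e' = [x+D']$ with a common point $x$ (always possible), one finds $\mu(e) - \mu(e') = -A(2[D]) + A(2[D']) = -2A(e-e')$, which under the embedding $A$ is the asserted equality $\mu(e) - \mu(e') = -2(e-e')$; the final clause follows since $p \circ A = 0$. Part (c) is then formal: fixing a base class $e_0$, part (b) gives $\mu(e) = \mu(e_0) - 2A(e-e_0)$, and as $e$ runs over $\mathrm{Pic}^1 X$ the class $e - e_0$ runs over all of $J(H)$, so $-2A(e-e_0)$ runs over $-2A(J(H)) = A(J(H))$ (multiplication by $2$ being surjective on the torus $J(H)$); hence $\mu(\mathrm{Pic}^1 X)$ is the coset $\mu(e_0) + A(J(H))$ of the subtorus $A(J(H)) \cong J(H)$.

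Part (d) carries the only real content. Applying $c$ to the formula and using $c \circ A = (g-1)\cdot\mathrm{id}$ gives $c\mu([x+D]) = -2(g-1)[D] + c\mu(x)$. On the point $x$ the composite $c\mu$ is the normal function $\kappa$ studied over $\mm_{g,1}$, so $c\mu(x)$ is the class of $(2g-2)x - K$ by commutativity of the diagram of Section \ref{normal} (equivalently by Harris's computation in \cite{harris}). Substituting this, writing $[D] = e - [x]$ in $\mathrm{Pic}^0 X$, and carrying out a short manipulation in the Picard torsors of $X$ — using compatibility of multiplication by $2g-2$ with the torsor structure, so that $(2g-2)e - (2g-2)[x] = (2g-2)(e-[x])$ — identifies $c\mu(e)$ with the class of $(2g-2)e - K$ for every $e \in \mathrm{Pic}^1 X$. (Equivalently: both $e \mapsto c\mu(e)$ and $e \mapsto [(2g-2)e - K]$ are affine maps $\mathrm{Pic}^1 X \to J(H)$; they agree at $x \in X$ by the $\mm_{g,1}$ diagram and have the same linear part by $c \circ A = (g-1)\cdot\mathrm{id}$, hence coincide.) The main obstacle is not conceptual but bookkeeping: one must track carefully the interplay between the torsor structure on $\mathrm{Pic}^1 X$ and the group structure on $J(H) = \mathrm{Pic}^0 X$ and keep the sign conventions for $A$, for $c$, and for (\ref{difference}) mutually consistent; once these are pinned down, (a)–(c) are essentially formal and (d) rests only on the two inputs $c \circ A = (g-1)\cdot\mathrm{id}$ and $c\mu(x) = [(2g-2)x - K]$.
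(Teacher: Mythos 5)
Your treatments of (a), (b) and (c) are correct and essentially equivalent to the paper's: the paper proves (a) by the direct computation $\mu(e)=\sum_i m_i\mu(x_i)$ for a representative $\sum_i m_i x_i$ of $e$ with $\sum_i m_i=1$, which is the same as your ``unique affine extension with linear part $-2A$'' once one notes the two notions of linear extension coincide; your common-point decomposition in (b) is if anything cleaner than the paper's common-$D$ decomposition (which tacitly assumes $e-e'$ is of the form $[x-y]$), and (c) is identical.

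Part (d), however, contains a genuine error. From your intermediate formula $c\mu([x+D])=-2(g-1)[D]+c\mu(x)$ and the input $c\mu(x)=[(2g-2)x-K]$, substituting $[D]=e-[x]$ gives
\[
c\mu(e)=-(2g-2)\bigl(e-[x]\bigr)+(2g-2)[x]-[K]=(4g-4)[x]-(2g-2)e-[K],
\]
which is not $[(2g-2)e-K]$; worse, it depends on the choice of $x$ in the decomposition $e=[x+D]$, so the ``short manipulation'' you defer cannot be carried out. The parenthetical version fails at the same spot: with $c\circ A=(g-1)\,\mathrm{id}$ the linear part of $e\mapsto c\mu(e)$ is $-2(g-1)\,\mathrm{id}=-(2g-2)\,\mathrm{id}$, the \emph{negative} of the linear part of $e\mapsto[(2g-2)e-K]$, so the two affine maps do not have the same linear part. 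The underlying issue is that the three conventions you combine --- $c\circ A=(g-1)\,\mathrm{id}$, relation (\ref{difference}), and $c\mu(x)=[(2g-2)x-K]$ --- are mutually incompatible as stated (applying $c$ to (\ref{difference}) yields $c\mu(x)-c\mu(y)=-(2g-2)[x-y]$, whereas the third fact gives $+(2g-2)[x-y]$); indeed consistency of (b) with (d) forces $c\circ A=-(g-1)\,\mathrm{id}$ on the intermediate jacobians. The paper's proof of (d) avoids this entirely by never invoking $c\circ A$: it applies $c$ to the linearity statement (a), so that $c\mu(e)=\sum_i m_i\,[(2g-2)x_i-K]=[(2g-2)e-K]$ because $\sum_i m_i=1$. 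You should either argue this way, or first pin down the sign of $c\circ A$ compatibly with (\ref{difference}) and the normal function $\kappa$ before using it.
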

\begin{proof} \emph{(a)} Take a representative $\sum_i m_i x_i$ of $e$ with $m_i \in \zz$, $x_i \in X$ and $\sum_i m_i =1$. We would like to know whether $\mu(e)=\sum_i m_i \mu(x_i)$. For each index $i$ we can write
\[ e = [x_i + \sum_j m^{(i)}_j x_j]  \, ,  \]
where $m_j^{(i)}=m_j$ if $j \neq i$, and $m_j^{(i)}=m_i-1$ if $j=i$; note that $\sum_j m^{(i)}_j x_j$ is of degree zero. It follows that
\[ \mu(e) = -2 \, A( [\sum_j m^{(i)}_j x_j] ) + \mu(x_i) \]
for all $i$. Noting that $\mu(e) = \sum_i m_i \mu(e)$ we obtain
\[ \mu(e) = \sum_i m_i \mu(e) = -2 \, A([\sum_j (\sum_i m_i m_j^{(i)} )x_j]) + \sum_i m_i \mu(x_i) \, . \]
But $\sum_i m_i m_j^{(i)} =0$ for all $j$ and the result follows. \\
\emph{(b)} Write $e = [x+D]$ and $e'=[y+D]$ for a divisor $D$ of degree zero and for points $x,y \in X$. Equation (\ref{difference}) states that $\mu(x)-\mu(y) = -2 \, ([x-y])$ in $J(H)$. It follows that
\[ \mu(e) - \mu(e') = -A(2D) + \mu(x) + A(2D) - \mu(y) = -2([x-y]) = -2(e -e') \, , \]
as required. \\
\emph{(c)} This follows from \emph{(b)}, noting that the map $\mu \colon \mathrm{Pic}^1\, X \to J(\wedge^3H)$ has finite fibers.\\
\emph{(d)} This follows from \emph{(a)} and the fact that $c\mu(x)$ in $J(H)$ is the class of $(2g-2)x - K$ for each $x$ on $X$, as we saw in Section \ref{normal}.
\end{proof}
The proposition implies that there exist canonical normal functions $\mu$ as well as $\kappa = c\mu$ and $\nu = p\mu$ on $\Pic^1_g$, extending linearly the maps denoted by the same symbol on $\mm_{g,1}$.

\section{Proof of Theorem \ref{thmA}} \label{proofthmA}

In this section we prove Theorem \ref{thmA}. It suffices to consider the universal case, where $S$ is the moduli space $\Pic^1_g$ of pairs $(X,e)$ with $X$ a complex smooth projective curve of genus $g$ and $e$ a point on $\mathrm{Pic}^1 \, X$, and $\pi \colon \xx \to S$ is the universal curve over $\Pic^1_g$. Let $C_e=X_e-X_e^-$ be the flat family of Ceresa cycles on the universal jacobian $\jj$ over $S$. Let $F$ be the Fourier-Mukai transform on the Chow group $\mathrm{CH}^*(\jj/S)$ of relative cycles of $\jj/S$. We note that both $C_e$ and $F(C_e)$ are homologically trivial in the fibers of $\jj/S$.

Proposition \ref{first} establishes a connection between $C_e$ and the flat family of Gross-Schoen cycles $\Delta_e$ on the universal triple product curve $\yy$ over $S$. The proposition generalizes a result on the level of $\rr$-valued height pairings from \cite{zh}, Section~5.
\begin{prop} \label{first}
There exists a canonical isometry
\[   \langle \Delta_e, \Delta_e \rangle^{\otimes 2} \isom \langle C_e, F(C_e) \rangle^{\otimes 3}   \]
of hermitian line bundles on $\Pic^1_g$.
\end{prop}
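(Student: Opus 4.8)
The plan is to view Proposition \ref{first} as the line bundle (and metric) refinement of the cycle-class computation in \cite{zh}, Section~5, and to prove it by reducing, via the biextension formalism of Section \ref{connection_biext}, to a comparison of pullbacks of a single biextension line bundle along normal functions into $\jj(\wedge^3 H)$. First I would use Propositions \ref{bloch_biext} and \ref{bloch_biext_metric} to replace both sides by biextension pullbacks. On the right hand side, Proposition \ref{beauville} applied to the Fourier-Mukai transform lets me rewrite $\mathrm{pd}\,a_2(F(C_e))$ as $i(a_{g-1}(C_e))$, so that $\langle C_e, F(C_e)\rangle\isom a_{g-1}(C_e)^*\bb$ with $\bb$ the biextension attached to the tautological polarization on $\jj^{g-1}(\jj/S)$. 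On the left hand side, after checking that $\mathrm{pd}\,a_2(\Delta_e)=i(a_2(\Delta_e))$ for the cup-product polarization — the resulting sign ambiguity is harmless since the pairing is a self-pairing — one gets $\langle\Delta_e,\Delta_e\rangle\isom a_2(\Delta_e)^*\bb$. The metric half of the statement will then be automatic from Propositions \ref{basicmetric}, \ref{bloch_biext_metric} and \ref{biextension}(ii), so everything comes down to an isometry between biextension pullbacks on $\Pic^1_g$.

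The next step is to pin down the relevant torus fibrations and normal functions. The cycle $\Delta_e$ is fixed by the K\"unneth idempotent $\pi_1\otimes\pi_1\otimes\pi_1$ and is $S_3$-symmetric; since the Abel-Jacobi map is compatible with correspondences, $a_2(\Delta_e)$ factors through the subtorus fibration of $\jj^2$ of the triple fibre product attached to the cohomological $S_3$-invariants of $\hh^1\otimes\hh^1\otimes\hh^1$, which by the Koszul sign rule is $\wedge^3\hh^1$. Poincar\'e duality on the fibres identifies this with the variation $\wedge^3 H$ of Section \ref{variation}, so $a_2(\Delta_e)$ becomes a normal function $\bar\nu\colon\Pic^1_g\to\jj(\wedge^3 H)$. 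Likewise Poincar\'e duality on the Jacobian identifies $\jj^{g-1}(\jj/S)$ with $\jj(\wedge^3 H)$, and under this identification $a_{g-1}(C_e)$ is Harris's linearly extended pointed harmonic volume $\mu$ of Section \ref{extension} — essentially by the definition of the harmonic volume, compare Proposition \ref{linear}.

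It then remains to compute $\bar\nu$ in terms of $\mu$ and to keep track of normalizations. To compute $\bar\nu$ I would push $\Delta_e$ forward along the fibrewise summation morphism $s_e\colon\yy\to\jj$, $(x_1,x_2,x_3)\mapsto[x_1+x_2+x_3]-3e$, which realizes the Poincar\'e-duality identification above, so that $\bar\nu=a_{g-1}((s_e)_*\Delta_e)$ up to a combinatorial normalization. Expanding the definition of $\Delta_e$ gives $(s_e)_*\Delta_e\equiv [3]_*X_e-3[2]_*X_e+3X_e$, with $X_e\subset\jj$ the image of the relative curve. Passing to the Beauville decomposition of $\CH^{g-1}(\jj/S)_\qq$ and using that $a_{g-1}$ annihilates the components $\CH^{g-1}_{(s)}$ with $s\neq1$ — because $[n]_*$ acts there by $n^{s+2}$ and on $\jj^{g-1}$ by $n^3$, whence $(n^{s+2}-n^3)\,a_{g-1}=0$ — only the $s=1$ part survives, on which the coefficient is $3^3-3\cdot 2^3+3=6$; since $X_e-X_e^-=2\sum_{s\ \mathrm{odd}}(X_e)_{(s)}$ carries the coefficient $2$ on that same component, one obtains $a_{g-1}((s_e)_*\Delta_e)=3\,a_{g-1}(C_e)$, i.e. $\bar\nu=3\mu$. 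Feeding $\bar\nu=3\mu$ back into the two biextension pullbacks, and combining the relation $[3]^*\bb\isom\bb^{\otimes 9}$ (bilinearity of the biextension) with the numerical constants that relate the cup-product polarization on $\wedge^3\hh^1$ and the tautological polarization on $\wedge^{2g-3}\hh$ to the polarization $Q_{\wedge^3 H}$ of Section \ref{variation} — these are symmetric-function combinatorics independent of $g$ — the various powers collapse to yield $\langle\Delta_e,\Delta_e\rangle^{\otimes 2}\isom\langle C_e, F(C_e)\rangle^{\otimes 3}$.

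I expect the main obstacle to be twofold. First, making the identity $(s_e)_*\Delta_e\equiv[3]_*X_e-3[2]_*X_e+3X_e$ precise for a genuinely moving, possibly non-effective base class $e$ over $\Pic^1_g$: the honest computation produces the cycles $[n]_*X_e$ translated by points of the form $[p_i]-e$, and although the first moments of these translations cancel, one must still argue that their higher moments contribute no extra normal function after projecting to the $s=1$ Beauville component and applying $a_{g-1}$. Second, and more essential, is the normalization bookkeeping: one must track the combinatorial constants through the K\"unneth and Poincar\'e-duality identifications carefully enough that the exponents come out to be exactly $2$ and $3$ and not merely proportional — this is the step that forces the small numerical factor present in the statement (and in \cite{zh}, Section~5).
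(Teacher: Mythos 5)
Your route is genuinely different from the paper's, and considerably heavier. The paper proves Proposition \ref{first} in three lines entirely at the level of Bloch's pairing: writing $f\colon\yy\to\jj$ for the map $(x,y,z)\mapsto[x+y+z-3e]$ (your $s_e$), it invokes the two Chow-theoretic identities $f_*\Delta_e\equiv 3C_e$ in $\CH^{g-1}_o(\jj/S)$ (\cite{cvg}, Proposition 2.9) and $f^*F(C_e)\equiv 2\Delta_e$ in $\CH^2_o(\yy/S)$ (\cite{zh}, Theorem 1.5.5), and then bi-additivity and the projection formula of Propositions \ref{basic} and \ref{basicmetric} give
\[ \langle\Delta_e,\Delta_e\rangle^{\otimes 2}\isom\langle\Delta_e,f^*F(C_e)\rangle\isom\langle f_*\Delta_e,F(C_e)\rangle\isom\langle C_e,F(C_e)\rangle^{\otimes 3}\, . \]
No biextension analysis is needed at this stage; it only enters afterwards, in Proposition \ref{second}. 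Your plan discards Zhang's pullback identity --- which is precisely where the exponent $2$ comes from --- and tries to replace it by a direct comparison of biextension pullbacks on $\jj(\wedge^3 H)$. Your Beauville-decomposition computation of $a_{g-1}(f_*\Delta_e)=3\,a_{g-1}(C_e)$ is essentially a normal-function-level reproof of the Colombo--van Geemen input (and is fine modulo the translation terms you flag), but it only accounts for the factor $3$.

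The decisive step is therefore exactly the one you leave as an ``obstacle.'' Write $\bb'$ for the biextension on $\jj(\wedge^3 H)$ attached to the cup-product polarization of the $\wedge^3 H^1$-K\"unneth component of $H^3$ of the triple product. Granting $\bar\nu=3\mu$ and $[3]^*\bb'\isom(\bb')^{\otimes 9}$, your scheme yields $\langle\Delta_e,\Delta_e\rangle^{\otimes 2}\isom\mu^*(\bb')^{\otimes 18}$ against $\langle C_e,F(C_e)\rangle^{\otimes 3}\isom\mu^*\bb_{\wedge^3 H}^{\otimes 3}$, so the proposition becomes equivalent to $(\bb')^{\otimes 6}\isom\bb_{\wedge^3 H}$, i.e.\ to a precise comparison of the restricted cup-product form with $Q_{\wedge^3 H}$ under your K\"unneth and Poincar\'e-duality identifications. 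Since the entire content of the statement is the pair of exponents $(2,3)$, asserting that these constants ``collapse'' is not a proof; that computation is the proof, and it is not done. Two further points are not cosmetic. First, the sign in $\mathrm{pd}\,a_2(\Delta_e)=\pm\, i(a_2(\Delta_e))$ is not harmless for a self-pairing: it decides whether you obtain $\bar\nu^*\bb'$ or its inverse, hence whether the right-hand side of your final isomorphism is $\langle C_e,F(C_e)\rangle^{\otimes 3}$ or $\langle C_e,F(C_e)\rangle^{\otimes -3}$ (compare the paper's care with $\mathrm{pd}=i\circ[-1]$ at the end of Section \ref{connection_biext}, which is exactly what produces the inverse in $\kappa^*\bb_H\isom\langle x_e,x_e\rangle^{\otimes -1}$). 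Second, the vanishing of the $(2,1,0)$-K\"unneth components of $a_2(\Delta_e)$ is the Gross--Schoen normalization and must be cited or proved; $S_3$-symmetry alone does not kill the symmetric combination of those components. As it stands the argument establishes the two sides are proportional pullbacks along $\mu$, not that they are isomorphic with the stated exponents.
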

\begin{proof}  We have a proper flat map $f \colon \yy \to \jj$ over $S$ given by sending a tuple $(X,e;x,y,z)$ to the class of $x+y+z-3e$ in the jacobian of $X$. We have $f_*\Delta_e \equiv 3C_e$ in $\mathrm{CH}^{g-1}_o(\jj/S)$ by \cite{cvg}, Proposition 2.9. Further we have $f^*F(C_e) \equiv 2\Delta_e$ in $\mathrm{CH}^2_o(\yy/S)$ by \cite{zh}, Theorem 1.5.5. Propositions \ref{basic} and \ref{basicmetric} then give a chain of canonical isometries
\[ \langle \Delta_e,\Delta_e \rangle^{\otimes 2} \isom \langle \Delta_e, f^* F(C_e) \rangle \isom \langle f_* \Delta_e, F(C_e) \rangle \isom
\langle C_e, F(C_e) \rangle^{\otimes 3}  \]
of hermitian line bundles on $\Pic^1_g$. The middle isometry is an application of the projection formula.
\end{proof}
Let $\kappa$, $\mu$ and $\nu$ be the normal functions on $\Pic^1_g$ constructed as a result of the discussion in Section \ref{extension}.
\begin{prop} \label{second} There exists a canonical isometry
\[ \langle C_e, F(C_e) \rangle \isom \mu^* \bb_{\wedge^3 H}  \]
of hermitian line bundles on $\Pic^1_g$.
\end{prop}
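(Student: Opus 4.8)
The plan is to rewrite $\langle C_e, F(C_e)\rangle$ via the Poincar\'e biextension on a suitable intermediate jacobian, to use Beauville's compatibility of the Fourier-Mukai transform with the polarization in order to fold the two factors of the biextension into one, and finally to recognise what comes out as $\mu$ and $\bb_{\wedge^3 H}$.

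First I would apply Proposition \ref{bloch_biext}. Since $C_e \in \CH^{g-1}_o(\jj/S)$ and $F(C_e)\in\CH^2_o(\jj/S)$, with $(g-1)+2 = g+1 = \dim(\jj/S)+1$, it gives a canonical isomorphism
\[ \langle C_e, F(C_e)\rangle \isom \bigl( a_{g-1}(C_e),\ \mathrm{pd}\,a_2(F(C_e)) \bigr)^*\mathcal{B} \]
of line bundles on $S = \Pic^1_g$, where $\mathcal{B}$ is the Poincar\'e bundle on $\jj^{g-1}(\jj/S)\times\check{\jj}^{g-1}(\jj/S)$; by Proposition \ref{bloch_biext_metric} this is an isometry for the canonical metrics. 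Next, since $F(C_e)$ is the Fourier-Mukai transform of $C_e$ and both classes are relatively homologically trivial, Proposition \ref{beauville} applied with $p=g-1$, $q=2$ yields $\mathrm{pd}\,a_2(F(C_e)) = i\,a_{g-1}(C_e)$, where $i\colon\jj^{g-1}(\jj/S)\to\check{\jj}^{g-1}(\jj/S)$ is the polarization morphism coming from the tautological polarization on the universal jacobian. Hence the normal function section above is $(\mathrm{id}, i)\circ a_{g-1}(C_e)$, and consequently
\[ \langle C_e, F(C_e)\rangle \isom a_{g-1}(C_e)^*\,(\mathrm{id}, i)^*\mathcal{B} \]
isometrically, the right-hand bundle carrying the metric pulled back from $\mathcal{B}$.

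It then remains to identify the triple $\bigl( \jj^{g-1}(\jj/S),\ (\mathrm{id},i)^*\mathcal{B},\ a_{g-1}(C_e) \bigr)$ with $\bigl( \jj(\wedge^3 H),\ \bb_{\wedge^3 H},\ \mu \bigr)$. The torus fibration $\jj^{g-1}(\jj/S)$ is the intermediate jacobian fibration of the local system $\wedge^{2g-3}H$ over $S$, and Poincar\'e duality on the fibres of $\jj/S$ gives a canonical isomorphism of $\wedge^{2g-3}H$ with $\wedge^3 H$ up to a Tate twist, hence a canonical isomorphism $\jj^{g-1}(\jj/S)\isom\jj(\wedge^3 H)$ of torus fibrations over $S$. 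Under it, $a_{g-1}(C_e)$ corresponds to $\mu$: indeed the pointed harmonic volume $\mu$ on $\Pic^1_g$ was constructed in Sections \ref{normal} and \ref{extension} as exactly the Griffiths Abel-Jacobi image, in $\jj(\wedge^3 H)$, of the family of Ceresa cycles $C_e = X_e - X_e^-$. And under the same isomorphism the polarization $i$ corresponds to the polarization morphism $i_{Q_{\wedge^3 H}}$ attached to the determinant form $Q_{\wedge^3 H}$ of Section \ref{variation}; granting this, $(\mathrm{id}, i)^*\mathcal{B}$ is, by the very definition of $\bb_{\wedge^3 H}$ in Section \ref{variation} as the pullback of the Poincar\'e bundle along $(\mathrm{id}, i_{Q_{\wedge^3 H}})$ together with its trivialization and hermitian metric, identified with $\bb_{\wedge^3 H}$ as a hermitian line bundle. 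Pulling back along $\mu = a_{g-1}(C_e)$ and chaining the isometries above produces the asserted canonical isometry $\langle C_e, F(C_e)\rangle \isom \mu^*\bb_{\wedge^3 H}$.

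The step I expect to be the main obstacle is this last matching of polarizations, because of a normalisation subtlety: the Hodge-theoretic polarization of $\wedge^{2g-3}H$ given by cupping with the $(g-3)$-th power of the theta class differs from $Q_{\wedge^3 H}$, transported across the Poincar\'e-duality isomorphism, by a factor of $(g-3)!$. What must be used instead is the polarization $i$ occurring in Beauville's theorem, which is the one compatible with the Fourier-Mukai kernel and in which this factorial is automatically absorbed, so that $i$ corresponds to $i_{Q_{\wedge^3 H}}$ exactly and no numerical factor (which would replace $\bb_{\wedge^3 H}$ by a tensor power of itself) creeps in. Pinning this down --- that is, tracking Tate twists, signs and normalisations through the identification $\jj^{g-1}(\jj/S)\isom\jj(\wedge^3 H)$ --- is where the work lies and can be extracted from \cite{be} together with the computations of \cite{hrar}; with it in hand, everything else is a formal consequence of the results recorded in Sections \ref{bloch}--\ref{variation}.
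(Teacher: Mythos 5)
Your proposal follows essentially the same route as the paper's proof: apply Propositions \ref{bloch_biext} and \ref{bloch_biext_metric} to express $\langle C_e, F(C_e)\rangle$ as a pullback of the Poincar\'e biextension, use Proposition \ref{beauville} to replace $\mathrm{pd}\,a(F(C_e))$ by $i\,a(C_e)$, and then invoke the definition of $\bb_{\wedge^3 H}$ as $(\mathrm{id},i_Q)^*\mathcal{B}$ together with the fact that $a(C_e)=\mu$ by construction of the extended harmonic volume. The only difference is that you make explicit (and rightly flag as the delicate point) the identification $\jj^{g-1}(\jj/S)\isom\jj(\wedge^3 H)$ and the matching of the Fourier--Mukai-compatible polarization with $Q_{\wedge^3 H}$, which the paper passes over silently with the phrase ``by the construction of $\bb_{\wedge^3 H}$''.
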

\begin{proof} By Proposition \ref{bloch_biext_metric} we have a canonical isometry
\[ \langle C_e,F(C_e) \rangle \isom (a(C_e),\mathrm{pd} \, a(F(C_e)))^* \mathcal{B}_{\wedge^3H} \]
where $\mathcal{B}_{\wedge^3H}$ is the Poincar\'e biextension line bundle on $\jj(\wedge^3 H ) \times \check{\jj}(\wedge^3 H)$. Here we use $a$ as a shorthand for Abel-Jacobi image. By Proposition \ref{beauville} and by the construction of $\bb_{\wedge^3H}$ we find a chain of equalities
\[ (a(C_e),\mathrm{pd} \, a(F(C_e)))^* \mathcal{B}_{\wedge^3H} = (a(C_e),ia(C_e))^* \mathcal{B}_{\wedge^3 H} = \mu^* \bb_{\wedge^3 H} \, . \]
We obtain the proposition by combining these two results.
\end{proof}
\begin{prop} \label{third}
There exists a canonical isometry
\[ \mu^* \bb_{\wedge^3 H}^{\otimes g-1} \isom \nu^* \bb_{\wedge^3 H/H} \otimes \kappa^*\bb_H \]
of hermitian line bundles on $\Pic^1_g$.
\end{prop}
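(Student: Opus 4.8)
\emph{Proof proposal.} The plan is to obtain the claimed isometry by simply pulling back the isometry of Proposition \ref{equality2biextensions} along the normal function $\mu$. First I would recall that, by the discussion at the end of Section \ref{extension}, the normal functions $\mu$, $\kappa$ and $\nu$ on $\Pic^1_g$ fit into the commutative diagram displayed there, so that $\kappa = c \circ \mu$ and $\nu = p \circ \mu$ as maps from $\Pic^1_g$ into the intermediate jacobian fibrations $\jj(\wedge^3 H)$, $\jj(H)$ and $\jj(\wedge^3 H/H)$ pulled back from $\aa_g$ along the Torelli map and then along $\pi$. In particular, pulling back a line bundle on $\jj(\wedge^3 H)$ along $p \circ \mu$ (resp.\ $c \circ \mu$) agrees with pulling it back first along $p$ (resp.\ $c$) and then along $\mu$, and the same holds for hermitian metrics, since pullback of hermitian line bundles along a holomorphic map of complex spaces is a functor compatible with composition and with tensor products.

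Next, Proposition \ref{equality2biextensions} provides a canonical isometry $\bb_{\wedge^3 H}^{\otimes g-1} \isom p^* \bb_{\wedge^3 H/H} \otimes c^* \bb_H$ of hermitian line bundles on $\jj(\wedge^3 H)$. Applying the functor $\mu^*$ to this isometry, and using the compatibilities just mentioned, yields a canonical isometry
\[ \mu^* \bb_{\wedge^3 H}^{\otimes g-1} \isom \mu^* p^* \bb_{\wedge^3 H/H} \otimes \mu^* c^* \bb_H = (p\mu)^* \bb_{\wedge^3 H/H} \otimes (c\mu)^* \bb_H = \nu^* \bb_{\wedge^3 H/H} \otimes \kappa^*\bb_H \]
of hermitian line bundles on $\Pic^1_g$, which is exactly the asserted statement.

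I expect no real obstacle here: the entire content of the proposition is already contained in Proposition \ref{equality2biextensions} (which in turn rests on the identity of polarization forms $(g-1)Q_{\wedge^3 H} = p^* Q_{\wedge^3 H/H} + c^* Q_H$ from \cite{hrgeom}), and the present statement is just its pullback along $\mu$. The only point one must be careful about is purely bookkeeping, namely that $\mu$, $\kappa$ and $\nu$ are genuine morphisms of orbifolds over $\Pic^1_g$ with the factorizations $\kappa = c\mu$ and $\nu = p\mu$ — which was precisely what Section \ref{extension} was set up to establish — so that the pullback operations above make sense and commute as claimed.
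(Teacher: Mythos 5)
Your proposal is correct and coincides with the paper's own proof: the paper likewise obtains the isometry by pulling back the isometry of Proposition \ref{equality2biextensions} along the extended harmonic volume $\mu$ and invoking the factorizations $\kappa = c\mu$ and $\nu = p\mu$ from Section \ref{extension}. No further comment is needed.
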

\begin{proof} By Proposition \ref{equality2biextensions} we have a canonical isometry
\[ \bb_{\wedge^3 H}^{\otimes g-1} \isom p^* \bb_{\wedge^3H/H} \otimes c^*\bb_H  \]
of line bundles over $\jj(\wedge^3 H)$. We obtain the required isometry over $\Pic^1_g$
by pulling back this isometry along the extended harmonic volume $\mu$, noting that $\kappa=c\mu$ and $\nu=p\mu$.
\end{proof}
\begin{prop} Let $x_e$ be the relative degree zero divisor class on the universal curve $\xx$ over $\Pic^1_g$ given by $(2g-2)e-c_1(\omega)$, where $\omega$ is the relative dualizing sheaf of $\xx/\Pic^1_g$ and $e$ is the tautological relative degree one divisor. Then there exists a canonical isometry
\[ \kappa^* \bb_H \isom \langle x_e, x_e \rangle^{\otimes -1} \]
of hermitian line bundles on $\Pic^1_g$.
\end{prop}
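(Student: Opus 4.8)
The plan is to identify the normal function $\kappa$ with the Abel--Jacobi section attached to the relative degree zero class $x_e$, and then to read off the assertion from the biextension description of the Deligne pairing recalled at the end of Section~\ref{connection_biext}.

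First, note that $x_e = (2g-2)e - c_1(\omega)$ has relative degree $(2g-2)-(2g-2) = 0$ on $\xx/\Pic^1_g$, since $c_1(\omega)$ restricts to a canonical divisor $K$ on each fibre; hence its fibrewise Abel--Jacobi image defines a section $a_1(x_e) \colon \Pic^1_g \to \jj(H)$ of the universal jacobian and the Deligne pairing $\langle x_e, x_e\rangle$ makes sense. By Proposition~\ref{linear}(d) the value of $\kappa$ at a point $(X,e)$ is the class of $(2g-2)e - K$ in $J(H)$, which is exactly $a_1(x_e)(X,e)$; thus $\kappa = a_1(x_e)$ as sections of $\jj(H)$ over $\Pic^1_g$.

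Next I would invoke Proposition~\ref{bloch_biext} together with Proposition~\ref{bloch_biext_metric} in the case $p=q=d=1$ (equivalently, the explicit isometries displayed at the end of Section~\ref{connection_biext}): there is a canonical isometry
\[ \langle x_e, x_e\rangle \isom (a_1(x_e),\, \mathrm{pd}\, a_1(x_e))^* \mathcal{B}_H \]
of hermitian line bundles on $\Pic^1_g$, where $\mathcal{B}_H$ is the Poincar\'e biextension bundle on $\jj(H)\times\check{\jj}(H)$. By construction (Section~\ref{variation}) one has $\bb_H = (\mathrm{id}, i_{Q_H})^*\mathcal{B}_H$ with its metric, $i_{Q_H}$ being the principal polarization of $\jj(H)$ induced by the intersection form $Q_H$ on $H$; and $\mathrm{pd} = i_{Q_H}\circ[-1]$, so $\mathrm{pd}\, a_1(x_e) = -\,i_{Q_H}\, a_1(x_e)$ because $i_{Q_H}$ is a homomorphism. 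Since $\mathcal{B}_H$ is a biextension of $\jj(H)\times\check{\jj}(H)$ by $\gm$ whose metric is fibrewise translation invariant and trivial along the zero section (Proposition~\ref{biextension}), pulling it back along $[-1]$ in the second variable inverts it, yielding a canonical isometry
\[ (a_1(x_e),\, \mathrm{pd}\, a_1(x_e))^* \mathcal{B}_H \isom \bigl( (a_1(x_e),\, i_{Q_H}\, a_1(x_e))^* \mathcal{B}_H \bigr)^{\otimes -1} = (\kappa^* \bb_H)^{\otimes -1} . \]
Concatenating the two isometries gives $\langle x_e, x_e\rangle \isom (\kappa^*\bb_H)^{\otimes -1}$, whence the proposition.

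Given the machinery already in place this argument is essentially formal; the only delicate points are bookkeeping ones, and I expect them to be the main (modest) obstacle. One must check that the polarization $i_{Q_H}$ used to define $\bb_H$ in Section~\ref{variation} agrees with the principal polarization $i$ appearing in the formula $\mathrm{pd}=i\circ[-1]$ of Section~\ref{connection_biext} (both come from the standard intersection form on $H=H_1(X,\zz)$, so they coincide), and that the lone sign $[-1]$ that surfaces is precisely the one producing $\langle x_e, x_e\rangle^{\otimes -1}$ and not $\langle x_e, x_e\rangle$. The compatibility of the metrics is automatic from Proposition~\ref{bloch_biext_metric} and the fact that the metric on $\bb_H$ is pulled back from that on $\mathcal{B}_H$.
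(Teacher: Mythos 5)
Your argument is correct and is essentially identical to the paper's proof: both identify $\kappa$ with the Abel--Jacobi section of $x_e$, invoke Propositions \ref{bloch_biext} and \ref{bloch_biext_metric}, and use $\mathrm{pd}=i\circ[-1]$ together with the inversion of the biextension under $[-1]$ in the second variable to produce the sign $\otimes -1$. No discrepancies to report.
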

\begin{proof} By construction $\bb_H$ is the pullback along $(\mathrm{id},i)$ of the Poincar\'e bundle $\mathcal{B}_H$ on $\jj(H) \times \check{\jj}(H)$. 
We have seen at the end of Section \ref{connection_biext} that $\mathrm{pd}=i \circ [-1]$ in this case. Note that the Abel-Jacobi image of $x_e$ over $(X,e)$ is equal to $\kappa(X,e)$. We obtain a chain of isometries
\[ \kappa^*\bb_H \isom (a(x_e),ia(x_e))^* \mathcal{B}_H \isom
(a(x_e),\mathrm{pd} \, a(x_e))^* \mathcal{B}_H^{\otimes -1} \isom \langle x_e,x_e \rangle^{\otimes -1} \, , \]
with the last isometry given by Proposition \ref{bloch_biext_metric}. The proposition follows.
\end{proof}
\begin{proof}[Proof of Theorem \ref{thmA}] By combining the above propositions we obtain a canonical isometry of hermitian line bundles
\[  \langle \Delta_e,\Delta_e \rangle^{\otimes 2g-2} \isom \nu^*\bb^{\otimes 3}_{\wedge^3 H/H} \otimes \langle x_e,x_e \rangle^{\otimes -3} \]
on $\Pic^1_g$. The proof of Theorem \ref{thmA} is finished once we show that
there exists an up to sign canonical isomorphism of line bundles
\[  \nu^*\bb_{\wedge^3 H/H}^{\otimes 3} \isom \langle \omega, \omega \rangle^{\otimes 2g+1} \]
on $\mm_g$ of norm $\exp(-(2g-2)\varphi)$, where $\varphi$ is the invariant defined in (\ref{defphi}). Here $\omega$ is the relative dualizing sheaf of the universal curve over $\mm_g$, and $\langle \omega,\omega \rangle$ has the metric induced by the Arakelov metric on $\omega$ \cite{de}. By a result of Morita \cite{hrgeom}, Theorem 7 there exists an isomorphism of holomorphic line bundles
\[ \nu^*\bb_{\wedge^3 H/H} \isom (\det \pi_* \omega)^{\otimes 8g+4}  \]
on $\mm_g$. As the only invertible holomorphic functions on $\mm_g$ are constants, \cite{hrar} Lemma 2.1, the isomorphism is determined up to a scalar.

We can determine this scalar modulo a sign in a canonical way as follows. Consider the hyperelliptic locus $\hh_g$ inside $\mm_g$. Then the line bundle $(\det \pi_* \omega)^{\otimes 8g+4}$ restricted to $\hh_g$ has an up to sign canonical global section $\Lambda_g$, characterized by its unique extension as a global nowhere vanishing section of $(\det \pi_* \omega)^{\otimes 8g+4}$ over the moduli stack of hyperelliptic curves of genus $g$ over the integers.

On the other hand, the map $\nu$ restricted to $\hh_g$ coincides with the zero section, \cite{hrar} Proposition 6.7. It follows that the line bundle $\nu^*\bb_{\wedge^3 H/H}$ restricted to $\hh_g$ has a canonical trivialization. The scalar in the Morita isomorphism is therefore determined, up to sign, by letting the trivializing sections of $\nu^*\bb_{\wedge^3 H/H}$ and of $(\det \pi_* \omega)^{\otimes 8g+4}$ over $\hh_g$ correspond.

We next compute the norm of the canonical Morita isomorphism, where $\det \pi_* \omega$ has the metric induced from the $L^2$-metric on $\pi_* \omega$ given by $\|\alpha\|^2 = \frac{i}{2}\int \alpha \wedge \overline{\alpha}$. Let $\delta_F$ be Faltings's delta-invariant \cite{fa} on $\mm_g$ and put $\delta=-4g\log(2\pi)+\delta_F$. Let $\lambda$ be Zhang's lambda-invariant \cite{zh}, Section~1.4, determined by the equality
\[ \lambda = \frac{g-1}{6(2g+1)}\varphi +\frac{1}{12}\delta \, . \]
Then the main result of \cite{dj} implies that the norm of the canonical Morita isomorphism is equal to $\exp(-(8g+4)\lambda)$.

Note that there is an up to sign canonical isomorphism $(\det \pi_* \omega)^{\otimes 12} \isom \langle \omega, \omega \rangle$ of line bundles on $\mm_g$ due to Mumford. According to Faltings \cite{fa}, Theorem 6 and Moret-Bailly \cite{mb}, Th\'eor\`eme 2.2 the norm of the Mumford isomorphism equals $\exp(\delta)$, if $\langle \omega, \omega \rangle$ is endowed with the metric induced from the Arakelov metric on $\omega$. Combining the Mumford and Morita isomorphisms we obtain an up to sign canonical isomorphism $ \nu^*\bb_{\wedge^3 H/H}^{\otimes 3} \isom \langle \omega, \omega \rangle^{\otimes 2g+1}$ which has norm $\exp(-(2g-2)\varphi)$, as required.
\end{proof}
It would be interesting to have a way of determining the Morita isomorphism (and hence the isomorphism in Theorem \ref{thmA}) that does not refer to the locus of hyperelliptic curves.

\section{Extension to stable curves of compact type} \label{furtherextension}

Let $\mm_g^c$ denote the orbifold of stable curves of genus $g\geq 2$ of compact type, i.e. of stable curves such that the associated jacobian has trivial toric part. Let $\Pic^{1,c}_g \to \mm_g^c$ denote the universal degree one part of the Picard variety over $\mm_g^c$. The objective of this section is to extend the hermitian line bundle $\langle \Delta_e,\Delta_e \rangle$ over $\Pic^{1,c}_g$, and to prove Theorems \ref{thmB} and \ref{thmC}. A basic reference for this section is \cite{hain_normal}, esp. Section~5.

First of all note that by pulling back the various variations of polarized Hodge structure on $\aa_g$ constructed in Section~\ref{variation} along the Torelli map $\mm_g^c \to \aa_g$ we obtain variations of polarized Hodge structure over $\mm_g^c$, together with their associated intermediate jacobian fibrations $\jj(V_\zz)$ endowed with a canonical metrized biextension line bundle and canonical invariant $(1,1)$-form $w_V$. By the discussion in Section~5 of \cite{hain_normal}, and linear extension, each of the normal functions $\kappa$, $\mu$, $\nu$ defined above on $\Pic^1_g$ extends as a normal function into the appropriate intermediate jacobian fibration over $\mm^{c}_g$. In particular we have a canonical hermitian line bundle $\mu^*(\bb_{\wedge^3 H})$ over $\Pic^{1,c}_g$. It is the unique extension of the hermitian line bundle $\mu^*(\bb_{\wedge^3 H})$ over $\Pic^1_g$ as a hermitian line bundle over $\Pic^{1,c}_g$.  

By Propositions \ref{first} and \ref{second} above we have a canonical isometry $\langle \Delta_e,\Delta_e \rangle^{\otimes 2} \isom \mu^*(\bb^{\otimes 3}_{\wedge^3 H})$ over $\Pic^1_g$. We conclude that there exists a unique extension of the Bloch pairing $\langle \Delta_e,\Delta_e \rangle$ as a hermitian line bundle over $\Pic^{1,c}_g$. In particular we have a canonical extension of $\langle \Delta_e,\Delta_e \rangle$ as a line bundle over $\Pic^{1,c}_g$. Almost by construction, the following proposition holds.
\begin{prop} \label{GSaspullback}
We have a canonical isometry $\langle \Delta_e,\Delta_e \rangle^{\otimes 2} \isom \mu^*(\bb^{\otimes 3}_{\wedge^3 H})$ of hermitian line bundles over $\Pic^{1,c}_g$.
\end{prop}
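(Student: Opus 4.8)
The plan is to propagate, from the dense open substack $\Pic^1_g$ to all of $\Pic^{1,c}_g$, the canonical isometry already in hand. Combining Propositions \ref{first} and \ref{second} gives, over $\Pic^1_g$, a canonical isometry
\[ \langle \Delta_e,\Delta_e \rangle^{\otimes 2} \isom \langle C_e, F(C_e) \rangle^{\otimes 3} \isom \mu^*(\bb_{\wedge^3 H}^{\otimes 3}) \]
of hermitian line bundles, so it suffices to show that this isometry extends over $\Pic^{1,c}_g$. I would first record that the right hand side extends. By Hain's analysis of normal functions over the moduli space of stable curves of compact type (\cite{hain_normal}, Section~5), $\mu$ extends to a section of the intermediate jacobian fibration $\jj(\wedge^3 H)$ over $\mm^c_g$; here compactness of the jacobian (triviality of the toric part) is essential, as it guarantees that $\jj(\wedge^3 H)$ has no degenerate fibres along the boundary. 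The biextension line bundle $\bb_{\wedge^3 H}$ on $\jj(\wedge^3 H)$ carries its canonical smooth hermitian metric (Proposition \ref{biextension}, extended over $\mm^c_g$ as at the start of this section), so pulling back along the extended $\mu$ produces a hermitian line bundle $\mu^*(\bb_{\wedge^3 H})$ over $\Pic^{1,c}_g$ with continuous metric, restricting over $\Pic^1_g$ to the one considered before.

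Next I would transport the metric across the isometry. Over $\Pic^1_g$ the metric on $\langle \Delta_e,\Delta_e \rangle^{\otimes 2}$ corresponds, via the isometry, to that on $\mu^*(\bb_{\wedge^3 H}^{\otimes 3})$, which by the previous step extends continuously across the boundary; hence the metric on $\langle \Delta_e,\Delta_e \rangle$ extends continuously over $\Pic^{1,c}_g \setminus \Pic^1_g$. Since $\Pic^{1,c}_g$ is smooth and the boundary is a normal crossings divisor, a line bundle defined on the complement of the boundary and carrying a metric that extends continuously across it admits a unique extension to a hermitian line bundle on the whole space: the continuity of the metric bounds the transition functions, so they extend as invertible regular functions. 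This yields the unique hermitian extension of $\langle \Delta_e,\Delta_e \rangle$ over $\Pic^{1,c}_g$ invoked above, and in the same way a unique hermitian extension of $\langle \Delta_e,\Delta_e \rangle^{\otimes 2}$, which must coincide with $\mu^*(\bb_{\wedge^3 H}^{\otimes 3})$ because the two agree over the dense open $\Pic^1_g$. The canonical isometry therefore extends, giving the asserted one over $\Pic^{1,c}_g$.

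The only substantive ingredient is the input from \cite{hain_normal}: that over $\mm^c_g$ both the fibration $\jj(\wedge^3 H)$ and the normal function $\mu$ (equivalently, the flat family of Ceresa cycles) extend in such a way that the biextension metric remains continuous. Granting that, the remainder is a formal unwinding of the extension procedure, and the proposition holds essentially by construction.
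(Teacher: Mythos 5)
Your proposal matches the paper's own argument: extend $\mu$ over $\mm^c_g$ via Section~5 of \cite{hain_normal}, pull back the metrized biextension bundle to get a hermitian extension of $\mu^*(\bb_{\wedge^3 H})$, and use the uniqueness of hermitian (continuous-metric) extensions of line bundles across the boundary divisor to identify the extension of $\langle \Delta_e,\Delta_e\rangle^{\otimes 2}$ with $\mu^*(\bb^{\otimes 3}_{\wedge^3 H})$. The paper compresses the last step into ``almost by construction''; your spelled-out version of that uniqueness argument is correct and adds nothing essentially different.
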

Theorem \ref{thmB} is obtained by combining Proposition \ref{GSaspullback} with the following general result.  
\begin{thm} \label{hain}
Let $M$ be a complex manifold and let $\vv$ be a variation of polarized integral Hodge structures of weight $-1$ on $M$. Let $\jj(V) \to M$ be the associated intermediate jacobian fibration, let $\mu \colon M \to \jj(V)$ be a normal function and let $w_V$ be the canonical invariant $(1,1)$-form on $\jj(V)$. Then $\mu^*w_V$ is a non-negative $(1,1)$-form on $M$.
\end{thm}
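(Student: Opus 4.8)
The plan is to reduce the non-negativity of $\mu^*w_V$ — a pointwise condition — to a local computation, and there to exhibit $\mu^*w_V$ as a manifestly non-negative $(1,1)$-form. First I would localise: it suffices to treat a contractible Stein neighbourhood $U$ of an arbitrary point of $M$. Over such a $U$ the local system $\vv_\zz$ is constant, the Gauss--Manin connection is trivial in the corresponding flat frame, and the Hodge filtration is encoded by a holomorphic period map; since the surjection $\vv_\oo\to\vv_\oo/F^0$ of holomorphic bundles (with $F^0\subset\vv_\oo$ the Hodge subbundle, equal to $\vv^{0,-1}$ in weight $-1$) splits over the Stein base $U$ and $U$ is simply connected, I can lift $\mu$ to a holomorphic map $\tilde\mu\colon U\to\vv_\oo(U)$ representing it in $\jj(V)|_U=\vv_\oo/(F^0+\vv_\zz)$. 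Decomposing $\tilde\mu=\mu^-+\mu^+$ along the $C^\infty$ Hodge decomposition $\vv_\oo=\vv^{-1,0}\oplus\vv^{0,-1}$, the section $\mu^-=\pi_-\tilde\mu$ of the holomorphic bundle $\vv^{-1,0}\cong\vv_\oo/F^0$ is itself holomorphic.

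Next I would compute $\mu^*w_V$ by pulling back an explicit local potential of $w_V$. Since $w_V$ is half the Chern form of the biextension bundle $\bb$ of Proposition \ref{biextension}, one has near the image of $\mu$ a relation $w_V=\tfrac{i}{2}\partial\bar\partial\Phi$ with $\Phi$ real-analytic of classical theta-potential type: after a local choice of Lagrangian sublattice of $\vv_\zz$ over $U$, $\Phi$ is a Hodge-metric-positive quadratic form in the imaginary part of the lift relative to that sublattice, descending modulo the remaining periods with a pluriharmonic cocycle. Differentiating $\Phi\circ\mu=\Phi(\tilde\mu)$, the decisive inputs will be that $\tilde\mu$ is holomorphic ($\bar\partial\tilde\mu=0$); the Hodge--Riemann bilinear relations, i.e. that $Q$ vanishes on $\vv^{-1,0}\times\vv^{-1,0}$ and on $\vv^{0,-1}\times\vv^{0,-1}$ while $h:=iQ(\,\cdot\,,\overline{\,\cdot\,})$ is positive definite on $\vv^{-1,0}$; and that the Kodaira--Spencer map $\theta$ of $\vv$, which governs the $\partial$-variation of the projectors $\pi_\pm$, is $Q$-adjoint to its conjugate $\bar\theta$. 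Granting these, the derivatives landing on factors of the wrong Hodge type are annihilated by the isotropy relations, and the expression should collapse to a sum of manifestly non-negative $(1,1)$-forms: a ``Gram-matrix'' term $\tfrac{i}{2}\,h(\delta,\delta)$ with $\delta=\theta(\mu^+)+\pi_-\partial\tilde\mu$ the covariant derivative of the holomorphic section $\mu^-$ for the Chern connection of $\vv^{-1,0}$, plus a Griffiths-curvature term that is $\geq 0$ because the Hodge subbundle $F^0$ has seminegative curvature — a restatement of the positivity of $Q$. This gives $\mu^*w_V\geq 0$ on $U$, hence on $M$. An essentially equivalent route is to work with $\mu^*\bb=(\mu,i_Q\circ\mu)^*\mathcal B$: the normal function is paired with its own image under the polarization isogeny $i_Q$, and it is the positivity of $Q$ that forces this pullback to have non-negative curvature.

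The hard part will be the bookkeeping in the differentiation step. Both the Hodge splitting and the auxiliary real structure entering $\Phi$ are only $C^\infty$, with $\partial$- and $\bar\partial$-derivatives governed by $\theta,\bar\theta$; one must track these carefully, verify that the pluriharmonic periodicity cocycle contributes nothing to $\partial\bar\partial(\Phi\circ\mu)$ once the base is allowed to vary (so that the choice of Lagrangian sublattice is immaterial), and check that the curvature term enters with the sign making it non-negative rather than spoiling the bound. The rank-one model — $\vv$ the first homology of a family of elliptic curves, $\jj(V)$ that family, $\mu$ a section — is a useful sanity check: the statement then specialises to the classical semipositivity of the N\'eron--Tate height form along a section. (A fully worked-out treatment is in \cite{hrar} and \cite{hain_normal}.)
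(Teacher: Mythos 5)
The paper offers no proof of this statement: it is quoted verbatim from \cite{hain_normal}, Theorem~13.1, so the only meaningful comparison is with Hain's argument. The overall shape of your plan --- localize, choose a lift of the normal function, and exhibit $\mu^*w_V$ as a Gram-matrix form for the positive definite Hodge metric on $\vv^{-1,0}$ --- is indeed the shape of that argument, and your sketch is essentially correct in the special case where $\vv$ has only the two Hodge pieces $\vv^{-1,0}\oplus\vv^{0,-1}$, i.e.\ where $\jj(V)\to M$ is a family of polarized abelian varieties and the statement is the semipositivity of the N\'eron--Tate height form of a section; there your theta potential exists and holomorphy of the lift is all that is needed.

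However, the theorem is stated, and is used in the paper (for $\wedge^3H$ and $\wedge^3H/H$), for an arbitrary polarized variation of weight $-1$, where the Hodge decomposition has four or more pieces, e.g.\ $\vv^{-2,1}\oplus\vv^{-1,0}\oplus\vv^{0,-1}\oplus\vv^{1,-2}$, and there your argument has a genuine gap. First, by the second Hodge--Riemann relation $i^{p-q}Q(v,\bar v)>0$, the form $iQ(\cdot,\bar\cdot)$ takes opposite signs on $\vv^{-1,0}$ and on $\vv^{-2,1}$; hence the invariant hermitian form on the fibre tangent bundle $\vv_\oo/F^0\cong\vv^{-1,0}\oplus\vv^{-2,1}$ is \emph{indefinite}, the fibres of $\jj(V)$ are non-algebraic complex tori, $w_V$ restricted to a fibre is an indefinite form, and there is no ample theta bundle, Lagrangian-sublattice potential, or ``Hodge-metric-positive quadratic form in the imaginary part'' of the kind your computation relies on. Second, and decisively, your list of inputs omits the one property of a normal function that makes the theorem true: Griffiths horizontality, i.e.\ the infinitesimal period relation $\nabla\tilde\mu\in F^{-1}\vv_\oo\otimes\Omega^1_M$. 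This is strictly stronger than holomorphy of a lift (it is automatic only in the two-piece case, where $F^{-1}\vv_\oo=\vv_\oo$), and it is exactly what forces the $(1,0)$-derivative of the lift, taken modulo $F^0$, to land in the single graded piece $\mathrm{Gr}_F^{-1}\cong\vv^{-1,0}$ on which the relevant form is positive definite; Hain's computation then gives $2\mu^*w_V$ directly as the Gram form of this derivative, with no residual curvature term. For a merely holomorphic section the derivative may have a $\vv^{-2,1}$-component, which enters with the opposite sign, and the conclusion fails. So the repair is not bookkeeping: you must build horizontality into the argument from the start and replace the theta-potential computation by a direct evaluation of $w_V$ (equivalently, of the curvature of $\bb$) on a local horizontal holomorphic lift.
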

\begin{proof} See \cite{hain_normal}, Theorem 13.1.
\end{proof}
\begin{proof}[Proof of Theorem \ref{thmB}]
From Proposition \ref{GSaspullback} it follows that the Chern form of $\langle \Delta_e,\Delta_e \rangle^{\otimes 2}$ and the Chern form of $\mu^*(\bb_{\wedge^3 H}^{\otimes 3})$ are equal on $\Pic^{1,c}_g$. By Proposition \ref{biextension}, the Chern form of $\mu^*(\bb_{\wedge^3 H})$ is equal to $2\, \mu^*w_{\wedge^3 H}$ where $w_{\wedge^3 H}$ is the canonical invariant $(1,1)$-form on $\jj(\wedge^3 H)$. Theorem \ref{hain} then implies that the Chern form of $\mu^*(\bb_{\wedge^3 H})$ is non-negative.
\end{proof}
Our proof of Theorem \ref{thmC} is based on class calculations by Hain-Reed \cite{hrar} and Hain \cite{hain_normal}.
\begin{proof}[Proof of Theorem \ref{thmC}] Our starting point is the canonical isomorphism
\[ \bb_{\wedge^3 H}^{\otimes g-1} \isom p^* \bb_{\wedge^3H/H} \otimes c^*\bb_H  \]
of line bundles on the intermediate jacobian fibration $\jj(\wedge^3 H)$ over $\aa_g$, provided by Proposition \ref{equality2biextensions}. By pulling back along the map $\mu \colon \mm^{c}_{g,1} \to \jj(\wedge^3 H)$ we obtain, analogously to Proposition \ref{third}, a canonical isomorphism
\[ \mu^* \bb_{\wedge^3 H}^{\otimes g-1} \isom \nu^* \bb_{\wedge^3 H/H} \otimes \kappa^*\bb_H \]
of line bundles on $\mm^c_{g,1}$. By Theorem 1.3 of \cite{hrar} we have the equality
\[ \nu^* \bb_{\wedge^3 H/H} = (8g+4)\lambda -  \sum_{i=1}^{g-1} 4i(g-i) \delta_i^{ \{x\} } \]
in $\mathrm{Pic}(\mm^c_{g,1})$ and by Theorem 10.2 of \cite{hain_normal} we have
\[ \kappa^* \bb_H = 4g(g-1)\psi - 12 \lambda - \sum_{i=1}^{g-1} 4i(i-1) \delta_{g-i}^{ \{x\} } \, . \]
By combining we obtain
\[ \mu^* \bb_{\wedge^3 H} = 4g \psi + 8 \lambda - \sum_{i=1}^{g-1} 4i \delta_{g-i}^{ \{x\} } \, . \]
Using Proposition \ref{GSaspullback} we obtain the required expression for $\langle \Delta_e,\Delta_e \rangle$.
\end{proof}

\vspace{1cm}

\noindent Address of the author: \\  \\
Robin de Jong \\
Mathematical Institute \\
University of Leiden \\
PO Box 9512 \\
2300 RA Leiden \\
The Netherlands \\
Email: \verb+rdejong@math.leidenuniv.nl+

\end{document}